\newcommand{\commentout}[1]{}
\newcommand{\AP}[1] {{#1}}
\newcommand{\R}{\mathbb{R}}
\newcommand{\N}{\mathbb{N}}
\newcommand {\e}  {\varepsilon}
\newcommand {\sg} {\sigma}
\newcommand {\vp} {\varphi}
\newcommand {\Chi} {{\bf \raise 2pt \hbox{$\chi$}} }
\newcommand {\cae} { {\mathcal E} }
\newcommand {\f}   {\frac}
\newcommand {\p}   {\partial}
\newcommand*{\dd}{\mathop{\kern0pt\mathrm{d}}\!{}}
\newcommand{\beq}{\begin{equation}}
\newcommand{\eeq}{\end{equation}}
\newcommand{\bea} {\begin{array}{rl}}
\newcommand{\eea} {\end{array}}
\newcommand{\bepa}{\left\{ \begin{array}{l}}
\newcommand{\eepa} {\end{array}\right.}
\newtheorem{theorem}{Theorem}
\newtheorem{proposition}[theorem]{Proposition}
\newcommand{\qed}{{ \hfill
                       {\unskip\kern 6pt\penalty 500 \raise -2pt\hbox{\vrule\vbox to 6pt{\hrule width 6pt
                       \vfill\hrule}\vrule} \par}   }}
\title{Relaxation of the Cahn-Hilliard equation with singular single-well potential and degenerate mobility.}
\author{Beno\^ \i t Perthame\thanks{Sorbonne Universit\'{e}, CNRS,  Universit\'{e} de Paris, Inria, Laboratoire Jacques-Louis Lions, F-75005 Paris, France.} 
 \thanks{Email: Benoit.Perthame@sorbonne-universite.fr} 
 \footnotemark[4]
\and
Alexandre Poulain\footnotemark[1] \thanks{Email: poulain@ljll.math.upmc.fr} 
 \thanks{The authors have received funding from the European Research Council (ERC) under the European Union's Horizon 2020 research and innovation programme (grant agreement No 740623)}
}
\date{\today}
\begin{document}
\maketitle
\pagestyle{plain}
\pagenumbering{arabic}

\begin{abstract} 
  The degenerate Cahn-Hilliard equation is a standard model to describe living tissues. It takes into account cell populations undergoing short-range attraction and long-range repulsion effects. In this framework, we consider the usual Cahn-Hilliard equation with a singular single-well potential and degenerate mobility. These degeneracy and singularity induce numerous difficulties, in particular for its numerical simulation. To overcome these issues, we propose a relaxation system formed of two second order equations which can be solved with standard packages. This system is endowed with an energy and an entropy structure compatible with the limiting equation. Here, we study the theoretical properties of this system; global existence and convergence of the relaxed system to the degenerate Cahn-Hilliard equation. We also study the long-time asymptotics which interest relies on the numerous possible steady states with given mass. 
\end{abstract} 
\vskip .7cm

\noindent{\makebox[1in]\hrulefill}\newline
2010 \textit{Mathematics Subject Classification.}  35B40; 35G20 ; 35Q92; 92C10
\newline\textit{Keywords and phrases.} Degenerate Cahn-Hilliard equation;  Relaxation method; Asymptotic analysis; Living tissues
%

\section{Introduction}
\label{sec:intro}
The Degenerate Cahn-Hilliard equation (DCH in short) is a standard model, widely used in the mechanics of living  tissues,  \cite{Benamar_Goriely_2005, wise_three-dimensional_2008, BenAmar_C_F, agosti_cahn-hilliard-type_2017, agosti_self-organised_nodate,frigeri_multi-species_2018}. It is usual to set this problem in a  smooth bounded domain $\Omega  \subset \mathbb{R}^d$ with the zero flux boundary condition
\begin{equation}
\p_t n = \nabla \cdot \left( b(n) \nabla \left( -\gamma \Delta n + \psi^\prime(n) \right) \right)  \quad \text{in} \quad  \Omega \times (0,+\infty),
\label{eq:CH}
\end{equation}
\begin{equation}
\f{\p n}{\p \nu} = b(n)\f{ \p \left(- \gamma \Delta n + \psi^\prime(n) \right)}{\p \nu} = 0 \qquad \text{on} \quad  \p \Omega \times (0,+\infty),
\label{eq:CH-bound}
\end{equation}
where $\nu$ is the outward normal  vector to the boundary $\p \Omega$ and $n = \frac{n_1}{n_1+n_2}$ represents the relative density or volume fraction of one of the two cell types.

Degeneracy of the coefficient $b(n)$ and singularity of  the potential $\psi(n)$ make this problem particularly difficult to solve numerically and in particular, to preserve the apriori bound $0\leq n <1$ . Motivated by the use of standard software for elliptic or parabolic equations, we propose to study the following relaxed degenerate Cahn-Hilliard equation (RDHC in short)
\begin{equation}\left\{
\begin{aligned}
\p_t n &= \nabla \cdot \left(b(n) \nabla \left(\vp + \psi_+^\prime(n) \right) \right) \quad &\text{ in } \Omega \times (0,+\infty),
\\
- \sg \Delta \vp + \vp &= -\gamma \Delta n + \psi_-^\prime \big(n-\f{\sg}{\gamma}\vp \big)\quad &\text{ in } \Omega \times (0,+\infty).
\end{aligned}\right.
\label{eq:CH-relax}
\end{equation}
supplemented with zero-flux boundary conditions
\begin{equation}
\f{\p (  \gamma n- \sg \vp)}{\p \nu} =  b(n) \f{\p \big( \vp + \psi_{+}^\prime(n) \big)}{\p \nu} = 0 \qquad \text{ on } \p \Omega \times (0,+\infty).
\label{eq:CH-bound-relax}
\end{equation}
Our purpose is to study existence for this system, to prove that as $\sg \to 0$, the solution of RDCH system converges to the solution of the DCH equation and study the possible long term limits to steady states.
\\

We make the following assumptions for the different inputs of the system~\eqref{eq:CH-relax}.
For the mechanics of living tissues, the usual assumption is that the potential  $\psi$ is concave degenerate near $n=0$ (short-range attraction) and convex for $n$ not too small (long-range repulsion). Additionally, a singularity at $n=1$ is desired  to represent saturation by one phase \cite{byrne_modelling_2004}. For these reasons, we call the potential {\em single-well logarithmic}  and we decompose it in a convex and a concave part $\psi_\pm$ 
\begin{equation}
\psi(n) = \psi_+(n) + \psi_-(n),  \qquad \pm  \psi_\pm''(n) \geq 0, \qquad 0\leq n < 1.
\label{eq:psi-dec}
\end{equation}   
The singularity is contained in the convex part of the potential and we assume that 
\begin{equation}
\psi_+ \in C^2\big([0,1) \big), \quad \psi'_+(1)=\infty,
\label{eq:psi-plus}
\end{equation}
and  we extend the smooth concave part on $[0, 1]$ to the full line with 
\begin{equation}
\psi_- \in C^2(\R) \qquad \psi_-, \; \psi^\prime_-,\;  \psi''_- \quad  \text{are bounded and } \f{\sg}{\gamma} ||\psi_-^{\prime\prime}||_\infty < 1.
\label{eq:psi-minus}
\end{equation}
In practice, typical examples of potentials are, for some $n^* \in (0,1)$, see \cite{colombo_towards_2015,chatelain_morphological_2011} 
\begin{equation}
    \psi(n)= -(1-n^*)\ln(1-n) - \frac{n^3}{3} -(1-n^*)\frac{n^2}{2} -(1-n^*)n + k,
    \label{eq:pot1}
\end{equation}
\begin{equation}
    \psi(n)=  \frac 1 2 n \ln n + (1-n) \ln (1-n) - (n-\frac 1 2 )^2.
    \label{eq:pot2}
\end{equation}
The potential \eqref{eq:pot1} fulfills our assumptions and the convex/concave decomposition reads for $n\in [0, 1)$
\[
    \psi_+(n) = -(1-n^*)\log(1-n) - \frac{n^3}{3}, \qquad \psi_-(n) = -(1-n^*)\frac{n^2}{2} -(1-n^*)n + k.
\]
In this case $\psi_+$ is convex if $n^*\le 0.7$.
Potential \eqref{eq:pot2} does not satisfy our assumptions because of the additional singularity at $0$ (and thus is not treated here), however, it can also be decomposed as needed with
\[
    \psi_+(n) = \frac 1 2 n \ln n + (1-n) \ln (1-n) , \qquad \psi_-(n) = - (n-\frac 1 2 )^2.
\]
To satisfy the assumptions \eqref{eq:psi-plus} and \eqref{eq:psi-minus}, we need to extend the potential $\psi_-$ to all $\R$ since the above examples are defined for $n\in [0, 1)$, which is an immediate task.

The potential \eqref{eq:pot1} has been used to model the interaction between cancer cells from a glioblastoma multiforme and healthy cells by Agosti \textit{et al.} \cite{agosti_computational_2018} and promising results have been obtained. 
We also use the degeneracy assumption on $b\in C^1([0,1]; \R^+)$,
\begin{equation}
b(0)=b(1)= 0,  \qquad b(n) >0 \text{ for }0<n< 1. 
\label{eq:assb}
\end{equation}   

The typical expression in the applications we have in mind is $b(n)=n (1-n)^2$. Consequently, when considered as transport equations, both~\eqref{eq:CH} and~\eqref{eq:CH-relax} impose formally the property that $0 \leq n \leq 1$. However, we need an additional technical assumption, namely that there is some cancellation at $1$ such that 
\begin{equation}
b(\cdot) \psi^{\prime\prime}(\cdot) \in C([0,1]; \R).
\label{eq:assbpsi}
\end{equation}   

We implicitly assume~\eqref{eq:psi-dec}--\eqref{eq:assbpsi} in this paper.  Also, we always impose an initial condition satisfying
\begin{equation}
n^0 \in H^1(\Omega), \qquad 0 \le n^0 < 1 \quad \text{a.e. in }  \Omega.
\end{equation}
The assumption $n^0\in [0,1)$ is consistent with the degeneracy of mobility at $0$ which allows solutions to vanish on open sets. But the singularity of the potential at $1$ and the energy bound make that $n=1$ cannot be achieved except of a negligible set.
Thanks to the boundary condition~\eqref{eq:CH-bound}, the system conserves the initial mass 
\[
\int_\Omega n(x,t) dx =\int_\Omega n^0(x) dx =: M, \quad \forall t \ge 0.
\] 
We denote the flux associated with the RDCH system by
\begin{equation}
J_\sg(n,\vp) := -b(n)\nabla\left(\vp+\psi^\prime_+(n) \right).
\end{equation}

The system~\eqref{eq:CH-relax} comes with energy and entropy structures, namely, the energy is defined as
\begin{equation}
\cae_{\sigma} [n_{\sg}] = \int_\Omega \left[ \psi_{+}(n_{\sg})  +  \f{\gamma}{2}|\nabla (n_{\sg}-\f{\sigma}{\gamma}\vp_{\sg})|^2 + \f{\sg}{2 \gamma} |\vp_{\sg}|^2 + \psi_-(n_{\sg} -\f{\sg}{\gamma}\vp_{\sg}) \right] .
\label{eq:energy_L}
\end{equation}
The energy is bounded from below thanks to the assumptions above and satisfies 
\begin{equation}
\f{d}{dt} \cae_{\sigma} [n_{\sg}(t)]  = -\int_\Omega b(n_{\sg}) \big|\nabla (\vp_{\sg} + \psi'_{+}(n_{\sg}))\big|^2 \leq 0 .
\label{eq:deriv-energy_L}
\end{equation}
For the entropy, we set for $0< n <1$ the singular function 
 \begin{equation}
\phi^{\prime \prime}(n) = \f{1}{b(n)}, \qquad  \Phi[n] = \int_\Omega \phi \big(n(x)\big)dx.
\label{eq:assumption-entropy_L}
\end{equation}
The entropy functional behaves as follows in the case  $b(n)= n (1-n)^2$
\[
\phi (n)= n \log(n), \; n \approx 0^+, \qquad \phi (n)= - \log(1-n), \; n \approx 1^-.
\]
The relation holds
\begin{equation}
\begin{aligned}
\f{ d \Phi[n_{\sg}(t)]}{dt} = -\int_\Omega \gamma \left| \Delta\left( n_{\sg} - \f{\sg}{\gamma} \vp_{\sg}\right)\right|^2 + \f{\sg}{\gamma}|\nabla \vp_{\sg}|^2 
&+ \psi''_-(n_{\sg} - \f{\sg}{\gamma} \vp_{\sg}) \left| \nabla(n_{\sg} - \f{\sg}{\gamma}\vp_{\sg}) \right|^2 \\
&+ \psi^{\prime\prime}_{+}(n_{\sg})|\nabla n_{\sg}|^2.
\end{aligned}
\label{eq:entropy_L}
\end{equation}
Notice that entropy equality does not provide us with a direct a priori estimate because of the term $\psi^{\prime \prime}_-$ can be negative.  Therefore we have to combine it with the energy dissipation to write  
\[
\begin{aligned}
\Phi [n_{\sg}(T)] + \int_{\Omega_T} & \left[ \gamma \left| \Delta\left( n_{\sg} - \f{\sg}{\gamma} \vp_{\sg}\right)\right|^2 + \f{\sg}{\gamma}|\nabla \vp_{\sg}|^2 + \psi^{\prime\prime}_{+}(n_{\sg})|\nabla n_{\sg}|^2\right] 
\\
&\leq  \Phi[n^0] + \frac{2T}{\gamma} \| \psi''_- \|_\infty \;  \cae_{\sg}[n^0].
\end{aligned}
\]

The first use of the Cahn-Hilliard equation is to  model the spinodal decomposition occurring in binary materials during a sudden cooling~ \cite{cahn_free_1958,cahn_spinodal_1961}. The bilaplacian $-\gamma \Delta^2 n$ is used to represent surface tension and the parameter $\gamma$ is the square of the width of the diffuse interface between the two phases.  In both equations~\eqref{eq:CH} and~\eqref{eq:CH-relax}, $n = n(x,t)$ is a relative quantity: for our biological application this represents a relative cell density as derived from phase-field models~\cite{byrne_modelling_2004} and for this reason the  property  $n\in [0,1)$  is relevant. The biological explanation of the fact that $1$ is excluded from the interval of definition of $n$ is due to the observation that cells tend to not form aggregates that are too dense. For instance, the two phases can be  the relative density of cancer cells and the other component represents the extracellular matrix, liquid, and other cells. This  binary mixture tends to form aggregates in which the density of one component of the binary mixture is larger than the other component. The interest of the Cahn-Hilliard equation stems from  solutions that reproduce  the formation of such clusters of cells {\em in vivo} or on dishes.  
Several variants are also used. A Cahn-Hilliard-Hele-Shaw model is proposed by Lowengrub {\em et al}~\cite{lowengrub_analysis_2013} to describe the avascular, vascular and metastatic stages of solid tumor growth. They proved the existence and uniqueness of a strong solution globally for $d\le 2$ and locally for $d=3$ as well as the long term convergence to steady-state. The case with a singular potential is treated in~\cite{GIGr2018}. Variants can include the coupling with fluid equations and chemotaxis, see for instance~\cite{EbeGarcke2019} and the references therein.
\par

The analysis of the long-time behavior of the solution of the Cahn-Hilliard equation has also attracted much attention since the seminal paper~\cite{ElliottBlowey91}. A precise description of the $\omega$-limit set has been obtained in one dimension for the case of smooth polynomial potential and constant mobility in \cite{songmu_asymptotic_1986}. In this work, the effect of the different parameters of the model such as the initial mass, the width of the diffuse interface are investigated. In fact, the authors  show that when $\gamma$ is large, the solution converges to a constant  as $t\to \infty$.
The same happens when the initial mass is large. However when $\gamma$ is positive and small enough, the system admits nontrivial steady-states. For logarithmic potentials and constant mobility, Abels and Wilke~\cite{abels_convergence_2007} prove that solutions converge to a steady-state as time goes to infinity using the Lojasiewicz–Simon inequality. Other works have been made on the long term behavior of the solutions of some Cahn-Hilliard models including a source term~\cite{cherfils_generalized_2014}, with dynamic boundary conditions~\cite{gilardi_long_2010}, coupled with the Navier-Stokes equation~\cite{gal_asymptotic_2010}, for non-local interactions and a reaction term~\cite{iuorio_long-time_2017}. 
\par 

Many difficulties, both analytical and numerical,  arise in the context of Cahn-Hilliard  equation and its variants. Because of the bilaplacian term, most of the numerical methods require to change the equation \eqref{eq:CH} into a system of two coupled equations
\begin{equation}\left\{
\begin{aligned}
\p_t n &= \nabla \cdot \left(b(n) \nabla v  \right),\\
v &= -\gamma \Delta n + \psi^\prime(n).
\end{aligned}\right.
\label{eq:CH-coupled}
\end{equation}
This system of equations has been analyzed in the case where the mobility is degenerate and the potential is a logarithmic double-well functional by Elliott and Garcke \cite{elliott_cahn-hilliard_1996}. They establish the existence of weak solutions of this system. Agosti \textit{et al} \cite{agosti_cahn-hilliard-type_2017} establish the existence of weak solutions when $\psi$ is a single-well logarithmic potential which is more relevant for biological applications (see \cite{byrne_modelling_2004}). They also prove that this system preserves the positivity of the cell density and the weak solutions belong to 
\[
n\in L^\infty(0,T;H^1(\Omega)) \cap L^2(0,T;H^2(\Omega)) \cap H^1(0,T;(H^1(\Omega))^\prime),  \quad J \in L^2( (0,T) \times \Omega, \R^d) \quad
 \forall T>0, 
\] 
\par
The Cahn-Hilliard equation can be seen as an approximation of  the famous  microscopic model in \cite{GL1, GL2}. With our notations, it reads
\[ 
\p_t n = \nabla \cdot \left[b(n) \nabla \big( K_\sg \star n+\psi'(n)  \big) \right],
\]
with a symmetric smooth kernel $K_\sg  \underset{ \sg \to 0 }{\longrightarrow} \Delta \delta$. The convergence to the DCH equation has been answered recently in \cite{davoli_degenerate_2019} in the case of periodic boundary conditions. Although, very similar in its form, our relaxation model undergoes different a priori estimates which allow us to study differently the limit $\sg \to 0$ for \eqref{eq:CH-relax}.
\par
For a full review about the mathematical analysis of the  Cahn-Hilliard equation and its variants, we refer the reader to the recent book of Miranville \cite{miranville_cahn?hilliard_2019}.

Numerical simulations of the DCH system have been also performed in the context of double-well potentials in~\cite{elliott_cahn-hilliard_1986,barrett_finite_1999}. To keep the energy inequality is a major concern in numerical methods and the survey paper by Shen {\em et al}~\cite{ShenJieXY2019} presents a general method applied to the present context. 

\par Numerics is also our motivation to propose a relaxation of equation~\eqref{eq:CH}  in a form close to the writing~\eqref{eq:CH-coupled}. We recover the system~\eqref{eq:CH-relax} by introducing a new potential $\vp$ and a regularizing equation which defines  $v$ through $\nabla \vp$. We use the decomposition~\eqref{eq:psi-dec}  of the potential to keep the convex and stable part in the main equation for $n$, rejecting the concave and unstable part in the regularized equation. The relaxation parameter is  $\sigma$ and we need to verify that, in the limit $\sg \rightarrow 0$, we recover the original DCH equation~\eqref{eq:CH}.  This is the main purpose of the present paper. 
\par

As a first step towards the existence of solutions of~\eqref{eq:CH-relax}, in section \ref{sec:reg-ineq-exi}, we introduce a regularized problem which is not anymore degenerate and we prove the existence of weak solutions for this regularized-relaxed Cahn-Hilliard system. We show energy and entropy estimates from which we obtain a priori estimates which are used later on.
In section~\ref{sec:existence}, we pass to the limit in the regularization parameter $\epsilon$ and show the existence of weak solutions of the RDCH system. 
Then, in section~\ref{sec:cvg}, we prove the convergence as $\sigma \to 0$ to the full DCH model. Section~\ref{sec:ltb} is dedicated to the study of the long term convergence of the solutions to steady-states. We end the paper with some conclusions and perspectives.

\section{The regularized problem}
\label{sec:reg-ineq-exi}

To prove that the system \eqref{eq:CH-relax}, admits solutions and to precise the functional  spaces, we first define a regularized problem. Then we prove the existence of solutions and estimates based on energy and entropy relations.

\subsection{Regularization procedure}

We consider a small positive parameter $0 < \epsilon \ll 1$ and define the regularized mobility
\begin{equation}
B_\epsilon(n) = \begin{cases}
	b(1-\epsilon) &\text{ for } n \ge 1-\epsilon, \\
	b(\epsilon) &\text{ for } n \le \epsilon, \\
	b(n) &\text{ otherwise}.
\end{cases}
\label{eq:reg_mob}
\end{equation}
Then, there are  two positive constants $b_1$ and $B_1$,  such that
\begin{equation} 
	b_1 < B_\epsilon(n) < B_1,\quad \forall n \in \R. 
\label{eq:bn-assumption}	
\end{equation}
Thus, the regularized mobility satisfies
\begin{equation}
	B_\epsilon \in C(\R,\R^+).
	\label{eq:reg-mob-continuity}
\end{equation}
To define a regular potential, we smooth out the singularity located at $n=1$ which only occurs  in $\psi_+$, see~\eqref{eq:psi-plus}--\eqref{eq:psi-minus},  and preserve the assumption \eqref{eq:assbpsi} by setting 
\begin{equation}
\psi_{+,\epsilon}^{\prime \prime}(n) =
\begin{cases}
 \psi_+^{\prime \prime}(1-\epsilon) &\text{ for } n \ge 1-\epsilon, \\[5pt]
  \psi_+^{\prime \prime}(\epsilon) &\text{ for } n \le \epsilon ,   \\[5pt]
   \psi_+^{\prime \prime}(n)  &\text{ otherwise}.
\end{cases}
\end{equation}
It is useful to notice that, for some positive constants $D_1$ independent of $0< \epsilon \leq \epsilon_0$ and $D_\epsilon$, we have  
\begin{equation}
\psi_{+,\epsilon}(n) \in C^2(\R, \R)\quad  \psi_{+,\epsilon}(n) \geq -D_1, \quad  |\psi_\epsilon^{\prime} (n)|\le D_\epsilon(1+|n|) , \quad \forall n \in \R .
\label{eq:assume-psi}
\end{equation}
See also \cite{agosti_cahn-hilliard-type_2017} for details about the extensions needed for the potential~\eqref{eq:pot1}. 

\par
We can now define the regularized problem
\begin{equation}
\left\{
\begin{aligned}
\p_t n_{\sg,\epsilon} &= \nabla \cdot \left[ B_\epsilon(n_{\sg,\epsilon}) \nabla(\vp_{\sg,\epsilon} + \psi_{+,\epsilon}^\prime(n_{\sg,\epsilon}) ) \right], \\
- \sg \Delta  \vp_{\sg,\epsilon} + \vp_{\sg,\epsilon} &= -\gamma\Delta n_{\sg,\epsilon} + \psi_-^\prime(n_{\sg,\epsilon} - \f \sg \gamma \vp_{\sg,\epsilon}),
\end{aligned}
\right.
\label{eq:reg_pb}
\end{equation}
with zero-flux boundary conditions
\begin{equation}
\f{\p (n_{\sg,\epsilon}-\f{\sigma}{\gamma}\vp_{\sg,\epsilon}) }{\p \nu} = \f{\p \big(\vp_{\sg,\epsilon} + \psi_{+,\epsilon}^\prime(n_{\sg,\epsilon}) \big)}{\p \nu} = 0 \qquad \text{ on } \; \p \Omega \times (0,+\infty).
\label{eq:bound_reg}
\end{equation}
It is convenient to define  the flux of the regularized system as
\[
J_{\sigma,\epsilon} = - B_\epsilon(n_{\sg,\epsilon})\nabla \left(\vp_{\sg,\epsilon} + \psi_{+,\epsilon}^\prime(n_{\sg,\epsilon}) \right).
\] 

\subsection{Existence for the regularized problem}

We can now state the existence  theorem  for the regularized problem~\eqref{eq:reg_pb}.
\begin{theorem} [Existence for $\e >0$]
    Assuming $n^0 \in H^1(\Omega)$, there exists a pair of functions $(n_{\sigma,\epsilon},\vp_{\sigma,\epsilon})$ such that for all $T>0$,
    \begin{align*}   
        n_{\sigma,\epsilon} &\in L^2(0,T;H^1(\Omega)), \qquad 
        \partial_t n_{\sigma,\epsilon} \in  L^2(0,T;(H^1(\Omega))') ,\\ 
        \vp_{\sigma,\epsilon} &\in L^2(0,T;H^1(\Omega)), \\
        n_{\sigma,\epsilon}-\f\sg\gamma \vp_{\sigma,\epsilon} &\in L^2(0,T;H^2(\Omega)), \qquad \p_t \left(n_{\sigma,\epsilon}-\f\sg\gamma \vp_{\sigma,\epsilon}\right) \in  L^2(0,T;(H^1(\Omega))') ,
    \end{align*}  
 which satisfies the regularized-relaxed degenerate Cahn-Hilliard equation~\eqref{eq:reg_pb}, \eqref{eq:bound_reg} in the following weak sense: for all test function $\chi \in L^2(0,T;H^1(\Omega))$, it holds
    \begin{equation}
        \begin{aligned}
            \int_0^T <\chi, \p_t n_{\sigma,\epsilon}>  &= \int_{\Omega_T} B_\epsilon(n_{\sigma,\epsilon}) \nabla\left(\vp_{\sigma,\epsilon} + \psi^\prime_{+,\epsilon}(n_{\sigma,\epsilon}) \right) \nabla \chi ,\\
            \sigma \int_{\Omega_T} \nabla \vp_{\sigma,\epsilon} \nabla \chi &+ \int_{\Omega_T} \vp_{\sigma,\epsilon} \chi = \gamma\int_{\Omega_T} \nabla n_{\sigma,\epsilon} \nabla \chi + \int_{\Omega_T} \psi_-^\prime(n_{\sigma,\epsilon}-\frac{\sigma}{\gamma}\vp_{\sigma,\epsilon}) \chi  .
        \end{aligned}
        \label{eq:limit-reg-rel-sys}
    \end{equation}
    \label{th:existence-regularized}
\end{theorem}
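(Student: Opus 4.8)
The plan is to construct solutions by a Faedo--Galerkin approximation combined with the energy and entropy estimates announced in the introduction. Let $\{w_j\}_{j\ge 1}$ be the $L^2(\Omega)$-orthonormal basis of eigenfunctions of $-\Delta$ with homogeneous Neumann boundary conditions and eigenvalues $0=\lb_1\le\lb_2\le\cdots$; these are orthogonal in $H^1(\Omega)$ and respect the zero-flux conditions. I look for approximate solutions $n_N(t)=\sum_{j=1}^N a_j(t)w_j$ and $\vp_N(t)=\sum_{j=1}^N b_j(t)w_j$ in $V_N=\mathrm{span}\{w_1,\dots,w_N\}$.

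First I would treat the elliptic equation as a constraint determining $\vp_N$ from $n_N$. Projecting the second equation of \eqref{eq:reg_pb} onto $V_N$ gives, componentwise, $(1+\sg\lb_i)b_i=\gamma\lb_i a_i+\int_\Omega\psi_-^\prime(n_N-\f\sg\gamma\vp_N)\,w_i$, a fixed-point relation for $(b_i)$. Since $\psi_-^\prime$ is globally Lipschitz with constant $\|\psi_-''\|_\infty$ and hypothesis \eqref{eq:psi-minus} gives $\f\sg\gamma\|\psi_-''\|_\infty<1$, this map is a contraction: testing the difference $w$ of two candidates against itself yields $\sg\|\nabla w\|^2+\|w\|^2\le\f\sg\gamma\|\psi_-''\|_\infty\|w\|^2$, forcing $w=0$. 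Hence $\vp_N=\cas_N(n_N)$ is uniquely defined and, since $\psi_-\in C^2$, depends in a $C^1$ way on $(a_i)$ by the implicit function theorem. Substituting into the projection of the first equation produces an ODE system $\dot a_i=-\int_\Omega B_\e(n_N)\nabla(\vp_N+\psi_{+,\e}^\prime(n_N))\cdot\nabla w_i$ with continuous right-hand side, so Cauchy--Peano yields a local-in-time solution $(a_i)$.

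Next come the a priori estimates, derived as in \eqref{eq:deriv-energy_L} and \eqref{eq:entropy_L} but at the Galerkin level (testing the $n$-equation with the discrete chemical potential and the $\vp$-equation with $\p_t n_N$, up to the standard projection arguments for this basis). The energy identity gives $\cae_{\sg,\e}[n_N(t)]\le\cae_{\sg,\e}[n^0]$ together with $\int_0^T\!\int_\Omega B_\e(n_N)|\nabla(\vp_N+\psi_{+,\e}^\prime(n_N))|^2\le\cae_{\sg,\e}[n^0]$; since $B_\e\ge b_1>0$ by \eqref{eq:bn-assumption} and $\psi_{+,\e}\ge -D_1$ by \eqref{eq:assume-psi}, this controls $\nabla(n_N-\f\sg\gamma\vp_N)$ and $\vp_N$ in $L^\infty(0,T;L^2)$ and the flux in $L^2(\Omega_T)$. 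As noted after \eqref{eq:entropy_L}, the entropy relation is not by itself sign-definite because $\psi_-''$ may be negative, so I would combine it with the energy dissipation to obtain the displayed bound; this upgrades $n_N-\f\sg\gamma\vp_N$ to $L^2(0,T;H^2)$ and $\vp_N$ to $L^2(0,T;H^1)$, whence $n_N\in L^2(0,T;H^1)$. These bounds are uniform in $N$ and global in time, so the local solution extends to every $[0,T]$, and bounding $\p_t n_N$ and $\p_t(n_N-\f\sg\gamma\vp_N)$ in $L^2(0,T;(H^1)')$ follows from the weak form and the $L^2(\Omega_T)$ control of the flux.

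Finally I would pass to the limit $N\to\infty$. The uniform bounds furnish weak(-$*$) limits $n,\vp$ in the stated spaces; the decisive point is compactness, since $n_N$ is bounded in $L^2(0,T;H^1)$ with $\p_t n_N$ bounded in $L^2(0,T;(H^1)')$, so Aubin--Lions--Simon yields $n_N\to n$ strongly in $L^2(\Omega_T)$ and a.e.\ along a subsequence, and likewise for $u_N=n_N-\f\sg\gamma\vp_N$. This strong convergence is what allows passage to the limit in the nonlinear terms $B_\e(n_N)$, $\psi_{+,\e}^\prime(n_N)$ and $\psi_-^\prime(u_N)$: $B_\e$ is bounded and continuous, $\psi_-^\prime$ is bounded and Lipschitz, and $\psi_{+,\e}^\prime$ has at most linear growth by \eqref{eq:assume-psi}, so dominated convergence identifies each limit, while the linear terms pass by weak convergence. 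I expect the main obstacle to be exactly this last step, namely reconciling the merely weak convergence of gradients and fluxes with the strong convergence needed inside the nonlinearities, together with the bookkeeping in the combined energy--entropy estimate, since the absence of a sign on $\psi_-''$ means that neither functional alone closes the argument.
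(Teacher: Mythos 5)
Your overall architecture (Galerkin in the Neumann eigenbasis, energy estimate at the discrete level, uniform bounds, Aubin--Lions, limit passage in the nonlinearities by strong $L^2$ convergence plus boundedness of $B_\e$, $\psi_-'$ and the linear growth of $\psi_{+,\e}'$) is the same as the paper's, and your contraction argument for solving the discrete elliptic constraint --- testing the difference of two candidates with itself and invoking $\f{\sg}{\gamma}\|\psi_-''\|_\infty<1$ --- is a welcome justification of a step the paper leaves implicit. However, there is one genuine gap: you propose to obtain the $L^2(0,T;H^1)$ bound on $n_N$ (and the $H^2$ bound on $n_N-\f{\sg}{\gamma}\vp_N$) from the \emph{entropy} identity performed at the Galerkin level. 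That computation does not go through for a spectral Galerkin scheme: the entropy test function $\phi_\e'(n_N)$ is not in $V_N=\mathrm{span}\{w_1,\dots,w_N\}$, so it must be replaced by its projection $\Pi^N\phi_\e'(n_N)$; while the time-derivative term still collapses to $\f{d}{dt}\int_\Omega\phi_\e(n_N)$ (because $\p_t n_N\in V_N$), the dissipation term becomes $\int_\Omega B_\e(n_N)\nabla(\vp_N+\Pi^N\psi_{+,\e}'(n_N))\cdot\nabla\Pi^N[\phi_\e'(n_N)]$, and since $\nabla\Pi^N[\phi_\e'(n_N)]\neq\phi_\e''(n_N)\nabla n_N$ the cancellation $B_\e\,\phi_\e''=1$ that produces the signed right-hand side of \eqref{eq:entropy} is lost. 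This is precisely why the paper (following Elliott--Garcke) establishes the entropy relation only \emph{after} the limit $N\to\infty$, as a property of the solution of the non-degenerate regularized problem.

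Since your $H^1$ control of $n_N$, and hence your compactness and limit passage, rests on this unavailable discrete entropy bound, the argument as written does not close. The paper's substitute is purely energetic: from $B_\e\ge b_1>0$ and the flux bound one controls $\nabla(\vp_N+\Pi^N\psi_{+,\e}'(n_N))$ in $L^2(\Omega_T)$, which combined with the energy bound on $\nabla(n_N-\f{\sg}{\gamma}\vp_N)$ controls $\nabla\bigl(n_N+\f{\sg}{\gamma}\Pi^N\psi_{+,\e}'(n_N)\bigr)$; an algebraic manipulation exploiting $\psi_{+,\e}''\ge 0$ (bounded below on the regularized range) then extracts a bound on $\int_{\Omega_T}|\nabla\psi_{+,\e}'(n_N)|^2$ and hence on $\int_{\Omega_T}|\nabla n_N|^2$, after which $\nabla\vp_N=\f{\gamma}{\sg}\bigl(\nabla n_N-\nabla(n_N-\f{\sg}{\gamma}\vp_N)\bigr)$ is controlled too; the $H^2$ regularity of $n-\f{\sg}{\gamma}\vp$ is then recovered at the limit level by elliptic regularity applied to the second equation. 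A second, smaller gap: the bound on $\p_t(n_N-\f{\sg}{\gamma}\vp_N)$ in $L^2(0,T;(H^1)')$ does not follow merely ``from the weak form and the flux bound''; one must differentiate the discrete elliptic equation in time, test with $U_N=\p_t(n_N-\f{\sg}{\gamma}\vp_N)$, and again invoke $\f{\sg}{\gamma}\|\psi_-''\|_\infty<1$ to absorb the term $\f{\sg}{\gamma}\int|U_N|^2\psi_-''$.
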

\begin{proof}
   We adapt the proof of the theorem $2$ in \cite{elliott_cahn-hilliard_1996} where the authors prove the existence of solutions of the Cahn-Hilliard system with positive mobilities. Since the regularized mobility here is positive due to \eqref{eq:bn-assumption}, we can apply the same theorem. 
    The proof of existence follows the following different stages 
\\[5pt]
\textit{Step 1. Galerkin approximation.} Firstly, we make an approximation of the regularized problem \eqref{eq:reg_pb}. We define the family of  eigenfunctions $\{\phi_i\}_{i\in \N}$ of the Laplace operator subjected to zero Neumann boundary conditions.
        \[
            -\Delta \phi_i = \lambda_i\phi_i \text{ in } \Omega\quad  \text{ with } \quad  \nabla \phi_i \cdot \nu = 0 \text{ on } \partial \Omega. 
        \]
The family   $\{\phi_i\}_{i\in \N}$ form an orthogonal basis of both $H^1(\Omega)$ and $L^2(\Omega)$ and we normalize them, i.e. $(\phi_i,\phi_j)_{L^2(\Omega)} = \delta_{ij}$ to obtain an orthonormal basis. We assume that the first eigenvalue is $\lambda_1 =0$ (which does not introduce a lack of generality). 
        
We consider the following discretization of \eqref{eq:reg_pb}
        \begin{align}
            n^N(t,x) &= \sum_{i=1}^N c_i^N(t) \phi_i(x),\quad \vp^N(t,x) = \sum_{i=1}^N d_i^N(t) \phi_i(x), \label{eq:semi-discr1} \\
            \int_\Omega \partial_t n^N \phi_j &= -\int_\Omega B_\epsilon(n^N) \nabla\left(\vp^N + \Pi^N\left(\psi_{+,\epsilon}^\prime(n^N)\right) \right) \nabla \phi_j, \quad \text{for } j=1,...,N, \label{eq:semi-discr2}\\
            \int_\Omega \vp^N \phi_j &= \gamma \int_\Omega \nabla\left(n^N - \frac{\sigma}{\gamma} \vp^N \right)\nabla \phi_j + \int_\Omega \psi_-^\prime(n^N- \frac{\sigma}{\gamma} \vp^N ) \phi_j, \quad \text{for } j=1,...,N, \label{eq:semi-discr3}\\
            n^N(0,x) &= \sum_{i=1}^N \left(n_0,\phi_i \right)_{L^2(\Omega)}\phi_i. \label{eq:semi-discr4}
        \end{align}
We have used the $L^2$ projection $\Pi^N: L^2(\Omega) \to V$, where $V= \text{span}\{\phi_1,...,\phi_N \}$.
This gives the following initial value problem for a system of ordinary differential equations, for all $j=1,...,N$, 
        \begin{align}
            \partial_t c^N_j &= -  \int_\Omega B_\epsilon(\sum_{i=1}^N c^N_i \phi_i) \nabla \left( \vp^N+ \Pi^N\left(\psi_{+,\epsilon}^{\prime}(\sum_{i=1}^N c^N_i \phi_i)\right) \right) \nabla \phi_j   , \label{eq:init-val1}  \\
            d^N_j &= \gamma \lambda_j c^N_j - \sigma \lambda_j d^N_j + \int_\Omega \psi_-^\prime(\sum_{k=1}^N (c^N_k - \frac{\sigma}{\gamma} d^N_k)\phi_k) \phi_j, \label{eq:init-val2}  \\
            c_j^N(0) &= \left(n_0,\phi_j \right)_{L^2(\Omega)}.
 \label{eq:init-val3} 
\end{align}
 Since the right-hand side of equation \eqref{eq:init-val1} depends continuously on the coefficients $c^N_j$, the initial value problem has a local solution.
 \\[5pt]       
 \textit{ Step 2. Inequalities and convergences.}
Multiplying equation \eqref{eq:init-val1}, by $\phi_i \big(\vp^N+\psi_+^\prime(n^N) \big)$, then summing over $i$ and integrating over the domain leads to
 \begin{equation}
    \begin{aligned}
            \f{d}{dt} \int_\Omega \psi_{+,\epsilon}(n^N)  &+ \int_\Omega \partial_t(n^N) \vp^N \\
    &= \int_\Omega \sum_i (\vp^N+ \psi^\prime_{+,\epsilon}(n^N))\phi_i \int_\Omega \nabla \phi_i \left(B_\epsilon(n^N) \nabla\left(\vp^N + \Pi^N\left(\psi_{+,\epsilon}^\prime(n^N)\right) \right)  \right) \, \dd y \, \dd x.  
\end{aligned} 
    \label{eq:energy-discrete-1}
 \end{equation}
 Let us focus on the left-hand side with
 \[
    \int_\Omega \partial_t(n^N) \vp^N  = \int_\Omega\partial_t(n^N-\f\sg\gamma \vp^N)\vp^N + \f12 \f \sg\gamma \f{d}{dt} \int_\Omega |\vp^N|^2.
 \] 
 Then, using the equation \eqref{eq:semi-discr3}, we have that
 \[
    \int_\Omega\partial_t(n^N-\f\sg\gamma \vp^N)\vp^N = \f\gamma 2 \f{d}{dt} \int_\Omega |\nabla (n^N-\f{\sg}{\gamma} \vp^N) |^2 + \f{d}{dt}\int_\Omega \psi_-(n^N - \f{\sg}{\gamma}\vp^N).
 \]
 The right-hand side of equation \eqref{eq:energy-discrete-1} gives 
\[
    \begin{aligned}
    -\int_\Omega \sum_i &(\vp^N+ \psi^\prime_{+,\epsilon}(n^N)) \phi_i \int_\Omega \nabla \phi_i \left(B_\epsilon(n^N) \nabla\left(\vp^N + \Pi^N\left(\psi_{+,\epsilon}^\prime(n^N)\right) \right)  \right) \, \dd y \, \dd x \\
    &=  - \int_\Omega B_\epsilon(n^N) \left|\nabla\left(\vp^N + \Pi^N\left(\psi_{+,\epsilon}^\prime(n^N) \right)\right)\right|^2.
    \end{aligned}
\]
Altogether, we obtain 
\begin{equation}
\f{d}{dt} E(t) + \int_\Omega B_\epsilon(n^N) \left|\nabla\left(\vp^N + \Pi^N\left( \psi_{+,\epsilon}^\prime(n^N)\right) \right)\right|^2   \le 0,
\end{equation}
where 
\[
E(t) = \int_\Omega \psi_{+,\epsilon}(n^N) + \f\gamma 2\int_\Omega |\nabla (n^N-\f{\sg}{\gamma} \vp^N) |^2 + \f12 \f \sg\gamma \int_\Omega |\vp^N|^2+\int_\Omega \psi_-(n^N - \f{\sg}{\gamma}\vp^N).
\]
Next, to prove the compactness in space of $\nabla n^N$, we write 
\[
    \min_{n^N} \left(\frac{1+\f\sg\gamma \psi_{+,\epsilon}^{\prime\prime}}{\psi_{+,\epsilon}^{\prime\prime}} \right)^2 \int_\Omega \left|\nabla \psi_{+,\epsilon}^{\prime}(n^N) \right|^2
     \le \int_{\Omega} \left(\frac{1+\f\sg\gamma \psi_{+,\epsilon}^{\prime\prime}}{\psi_{+,\epsilon}^{\prime\prime}} \right)^2 \left|\nabla \psi_{+,\epsilon}^{\prime}(n^N) \right|^2 \le \int_\Omega \left|\nabla \left(n^N + \f\sg\gamma \psi_{+,\epsilon}^{\prime}(n^N)  \right) \right|^2.
\]
Therefore, for some  $\theta >0$, we have
\[
    \begin{aligned}
    \left(\left(\f\sg\gamma \right)^2 +\theta\right) \int_\Omega \big|\nabla \psi_{+,\epsilon}^\prime(n^N)\big|^2 \le  &\int_\Omega \big|\nabla\left(n^N-\f\sg\gamma \vp^N \right) + \f\sg\gamma \nabla \left(\vp^N + \Pi^N\left(\psi_{+,\epsilon}^\prime(n^N)\right) \right) \\&+ \f\sg\gamma\nabla\left(\psi_{+,\epsilon}^\prime(n^N) - \Pi^N\left(\psi_{+,\epsilon}^\prime(n^N) \right) \right)  \big|^2  .
    \end{aligned}
\]
Finally, we obtain
\[
    \begin{aligned}
    \left(\left(\f\sg\gamma \right)^2 +\theta \right) \int_\Omega \big|\nabla \psi_{+,\epsilon}^\prime(n^N)\big|^2 &\le C(T) +  \left(\f\sg\gamma \right)^2 \int_\Omega \left|\nabla\left( \psi_{+,\epsilon}^\prime(n^N) - \Pi^N\left(\psi_{+,\epsilon}^\prime(n^N) \right)\right) \right|^2 \\
    & \le C(T) + \left(\f\sg\gamma \right)^2 \int_\Omega \left|\nabla \psi_{+,\epsilon}^\prime(n^N)  \right|^2,
    \end{aligned}
\]
and we proved that 
\[
      \theta \int_\Omega \big|\nabla \psi_{+,\epsilon}^\prime(n^N) \big|^2 \le C(T).
\]
    Therefore, we can obtain from the previous inequalities the following
        \begin{align}
            \f{\gamma}{2}\int_\Omega &|\nabla (n^N-\f{\sigma}{\gamma}\vp^N)|^2\le C, \label{eq:semi-discr-ineq1}\\
            \f{\sg}{2 \gamma} \int_\Omega &|\vp^N|^2 \le C, \label{eq:semi-discr-ineq2} \\
            \int_{\Omega_T} B_\epsilon(n^N) &\big| \nabla \left(\vp^N + \Pi^N\left(\psi^\prime_{+,\epsilon}(n^N)\right)\right) \big|^2 \le C, \label{eq:semi-discr-ineq3}  \\
            \theta \min_{r\in\R}\left(\psi_{+,\epsilon}^{\prime\prime}(r) \right) \int_\Omega &|\nabla n^N|^2 \le C(T), \label{eq:semi-discr-ineq4}
        \end{align}
        which hold for positive values of $\gamma, \sigma, \theta$ and also for all finite time $T\ge 0$. Therefore, from these inequalities we can extract subsequences of $(n^N,\vp^N)$ such that the following convergences hold for any time $T\ge 0$ and small positive values of $\gamma, \sigma$.\par
        Taking $j=1$ in \eqref{eq:semi-discr2}, gives the results that $\f{d}{dt}\int n^N = 0$. 
        Then, using the inequality \eqref{eq:semi-discr-ineq4} and the Poincaré-Wirtinger inequality, we obtain 
        \begin{equation}
            n^N \rightharpoonup n_{\sg,\epsilon} \text{ weakly in } L^2(0,T;H^1(\Omega)).
        \label{eq:semi-discr-n-weak}
        \end{equation} 
        This result, in turn,  implies that the coefficients $c^N_j$ are bounded and a global solution to ~\eqref{eq:init-val1}--\eqref{eq:init-val3} exists.
        Choosing $j=1$ in \eqref{eq:semi-discr3} gives
        \[
            \int_\Omega \vp^N  = \int_\Omega \psi_-\left(n^N-\f\sg\gamma n^N \right),
        \]
        and combining \eqref{eq:semi-discr-ineq1}, \eqref{eq:semi-discr-n-weak} and the Poincaré-Wirtinger  inequality gives
        \begin{equation}
            \vp^N \rightharpoonup \vp_{\sg,\epsilon} \text{ weakly in } L^2(0,T;H^1(\Omega)).
            \label{eq:semi-discr-conv-vp}
        \end{equation}
        We also obtain from \eqref{eq:semi-discr-n-weak} and \eqref{eq:semi-discr-conv-vp}  
        \begin{equation}
           n^N - \f \sg\gamma \vp^N \rightharpoonup n_{\sg,\epsilon}-\f \sg\gamma \vp_{\sg,\epsilon} \text{ weakly in } L^2(0,T;H^1(\Omega)).
           \label{eq:semi-discr-conv-n-vp}
        \end{equation}
        From the previous convergence, we conclude that $\vp_{\sg,\epsilon}\in L^2(0,T;H^1(\Omega))$, therefore, using elliptic regularity we know that 
        \begin{equation}
            n_{\sg,\epsilon}-\f\sg\gamma \vp_{\sg,\epsilon} \in L^2(0,T;H^2(\Omega)).
            \label{eq:H^2-n-vp}
        \end{equation} 
        
        To be able to prove some strong convergence in $L^2(0,T;L^2(\Omega))$ of $n^N$, we need an information about the temporal derivative $\partial_t n^N$. From the first equation of the system, we have for all test functions $\phi \in L^2(0,T;H^1(\Omega))$
         \begin{equation}
            \begin{aligned}
            \left|\int_{\Omega_T} \partial_t n^N \phi \right| &= \left|\int_{\Omega_T} \partial_t n^N \Pi_N \phi \right| \\
            &= \left|\int_{\Omega_T} b(n^N) \nabla\left(\vp^N + \Pi^N\left(\psi^\prime_{+,\epsilon}(n^N)\right) \right) \nabla \Pi_N \phi \right| \\
            & \le  \left(  B_1\int_{\Omega_T} B_\epsilon(n^N) \left| \nabla\left(\vp^N + \Pi^N\left(\psi^\prime_{+,\epsilon}(n^N)\right) \right)\right|^2 \right)^\frac{1}{2} \left(\int_{\Omega_T}\left| \nabla \Pi_N \phi \right|^2\right)^\frac{1}{2}.
        \end{aligned}
        \label{eq:semi-discr-ineq-dtn-1}
    \end{equation}

Using \eqref{eq:semi-discr-ineq3}, we obtain 
\begin{equation}
    \left|\int_{\Omega_T} \partial_t n^N \phi \right| \le C \left(\int_{\Omega_T}\left| \nabla \Pi_N \phi \right|^2\right)^\frac{1}{2} .
    \label{eq:semi-discr-ineq-dtn}
\end{equation}

Thus we can extract a subsequence such that
\begin{equation}
            \partial_t n^N \rightharpoonup  \partial_t n_{\sg,\epsilon}\text{ weakly in }  L^2(0,T;(H^1(\Omega))').
 \label{eq:semi-discr-ptn}
\end{equation}
        From \eqref{eq:semi-discr-n-weak} and \eqref{eq:semi-discr-ptn} and using the Lions-Aubin Lemma, we obtain the strong convergence
\begin{equation}
    n^N \to n_{\sg,\epsilon} \text{ strongly in }  L^2(0,T;L^2(\Omega)).
    \label{eq:semi-discr-conv-strong-n}
\end{equation}
Next, we need to prove the strong convergence of $n^N-\f\sg\gamma \vp^N$ in $L^2(0,T;H^1(\Omega))$. 
In order to do that we must bound the $L^2(0,T;(H^1(\Omega))')$ norm of its time derivative. 
Starting from the equation \eqref{eq:init-val2}, multiplying it by $-\f{\sg}{\gamma}$, adding $c^N_j$ and calculating its time derivative, we obtain
\[
    \f{d}{dt}\left(c^N_j-\f\sg\gamma d^N_j\right) = \f{d}{dt} c^N_j -\sg \lambda_j\f{d}{dt} \left(c^N_j-\f\sg\gamma d^N_j\right) - \f\sg\gamma \f{d}{dt}\int_\Omega \psi_-^\prime\left(n^N-\f\sg\gamma \vp^N \right)\phi_j.
\]
Multiplying the previous equation by $\phi_j \p_t\left(n^N - \f{\sg}{\gamma} \vp^N\right)$, summing over $j$ and integrating over $\Omega$, we obtain
\[
    \begin{aligned}
    \int_\Omega \left(\p_t\left(n^N-\f\sg\gamma \vp^N\right)\right)^2 &+ \sg \int_\Omega \big|\nabla \left(\p_t\left(n^N-\f\sg\gamma \vp^N \right) \right) \big|^2= \int_\Omega \p_t n^N \p_t\left(n^N-\f\sg\gamma \vp^N\right)  \\
    &  - \sum_j \int_\Omega \phi_j \p_t\left(n^N-\f\sg\gamma \vp^N \right)\f\sg\gamma \f{d}{dt}\int_\Omega \psi_-^\prime\left(n^N-\f\sg\gamma \vp^N \right)\phi_j \,\dd x\,\dd y.
    \end{aligned}
\]
Let us define $U^N=\p_t\left( n^N-\f\sg\gamma \vp^N\right)$ and rewrite the previous equation
\[
    \sg \int_\Omega |\nabla U^N|^2 +  \int_\Omega |U^N|^2 = \int_\Omega  \p_t n^N U^N  -  \f{\sg}{\gamma}\int_\Omega |U^N|^2 \psi^{\prime\prime}_-(n^N-\f{\sg}{\gamma}\vp^N) .
\]
From the Cauchy-Schwarz inequality, we obtain 
\begin{equation}
    0 \le ||\nabla U^N||_{L^2(\Omega)}^2 + \left(1-\f{\sg}{\gamma} ||\psi_-^{\prime\prime}||_\infty \right) ||U^N||_{L^2(\Omega)}^2 \le ||\p_t n^N||_{L^2(\Omega)} ||U^N||_{L^2(\Omega)}.
    \label{eq:boundUN}
\end{equation}
Finally, from the \eqref{eq:psi-minus} we obtain that
\[
    || U^N ||_{L^2\left(0,T;\left(H^1(\Omega)\right)'\right)} \le C.
\]
Therefore, we can extract a subsequence such that 
\begin{equation}
    \p_t\left(n^N-\f \sg\gamma \vp^N \right) \rightharpoonup \p_t\left(n_{\sg,\epsilon}-\f \sg\gamma \vp_{\sg,\epsilon} \right)  \text{ weakly in }  L^2(0,T;(H^1(\Omega))').
    \label{eq:semi-discr-conv-dt-n-vp}
\end{equation}
Using \eqref{eq:semi-discr-conv-dt-n-vp} and \eqref{eq:H^2-n-vp} and the Lions-Aubin lemma we obtain the following strong convergence
\begin{equation}
    n^N-\f \sg\gamma \vp^N \to n_{\sg,\epsilon}-\f \sg\gamma \vp_{\sg,\epsilon} \text{ strongly in }  L^2(0,T;H^1(\Omega)).
    \label{eq:semi-discr-conv-strong-n-vp}
\end{equation}
 \\[5pt]  
\textit{Step 3. Limiting equation.} 
The main difficulty to pass to the limit in the equation \eqref{eq:semi-discr3} relies mainly on the convergence of the term $\int_\Omega \psi_-^\prime(n^N-\f\sg\gamma \vp^N) \phi_j$ which is solved using the strong convergence \eqref{eq:semi-discr-conv-strong-n-vp} and the properties \eqref{eq:psi-minus}.  Therefore, we obtain
\begin{equation}
    \psi_-^\prime(n^N-\f \sg\gamma \vp^N) \to \psi_-^\prime(n_{\sg,\epsilon}-\f \sg\gamma \vp_{\sg,\epsilon})\quad  \text{ a.e. in } \Omega_T. 
    \label{eq:conv_ponct_psim} 
\end{equation}
Then combining the convergences \eqref{eq:semi-discr-conv-n-vp}, \eqref{eq:semi-discr-conv-vp}, \eqref{eq:conv_ponct_psim} and the Lebesgue dominated convergence theorem, we pass to the limit in the equation \eqref{eq:semi-discr3}.
We can also pass to the limit in the first equation \eqref{eq:semi-discr2} by the standard manner (see \cite{lions_quelques_1969}), using the strong convergence \eqref{eq:semi-discr-conv-strong-n}, the properties of the mobility \eqref{eq:reg-mob-continuity} and the potential \eqref{eq:assume-psi}.
Altogether, we obtain the limiting system \eqref{eq:limit-reg-rel-sys}.
\end{proof}

\subsection{Energy, entropy and a priori estimates}
\label{sec:estimates}
%
The relaxed and regularized system~\eqref{eq:reg_pb} comes with an energy and an entropy. These provide us with estimates which are useful to prove the existence of global weak solutions of~\eqref{eq:reg_pb} and their convergence to the weak solutions of the original DHC equation or to the  RDHC as $\epsilon$ and/or $ \sg \rightarrow 0$.

Being given a solution $(n_{\sg,\epsilon},\vp_{\sg,\epsilon})$ satisfying Theorem~\ref{th:existence-regularized}, we define the energy associated with the regularized potential $\psi_{+,\epsilon}$ and relaxed system as
\begin{equation}
\cae_{\sigma,\epsilon} [n_{\sg,\epsilon}] = \int_\Omega \left[ \psi_{+,\epsilon}(n_{\sg,\epsilon})  +  \f{\gamma}{2}|\nabla (n_{\sg,\epsilon}-\f{\sigma}{\gamma}\vp_{\sg,\epsilon})|^2 + \f{\sg}{2 \gamma} |\vp_{\sg,\epsilon}|^2 + \psi_-(n_{\sg,\epsilon} -\f{\sg}{\gamma}\vp_{\sg,\epsilon}) \right] ,
\label{eq:energy}
\end{equation}
where $\vp_{\sg,\epsilon}$ is obtained from $n_{\sg,\epsilon}$ by solving the elliptic equation in~\eqref{eq:reg_pb}. 
Notice that $\cae_{\sigma,\epsilon}[n_{\sg,\epsilon}]$ is lower bounded, uniformly in $\epsilon$ and $\sigma$, thanks to the  assumptions on  $\psi_-$ in~\eqref{eq:psi-minus} and  the construction of $\psi_{\epsilon,+}$ in~\eqref{eq:assume-psi}.

\begin{proposition}[Energy]
Consider a solution $(n_{\sg,\epsilon}, \vp_{\sg,\epsilon})$ of~\eqref{eq:reg_pb}--\eqref{eq:bound_reg} defined by Theorem~\ref{th:existence-regularized}, then, the energy of the system $\cae_{\sigma,\epsilon} $ satisfies
\begin{equation}
\f{d}{dt} \cae_{\sigma,\epsilon} [n_{\sg,\epsilon}(t)]  = -\int_\Omega B_\epsilon(n_{\sg,\epsilon}) \big|\nabla (\vp_{\sg,\epsilon} + \psi'_{+,\epsilon}(n_{\sg,\epsilon}))\big|^2 \leq 0 .
\label{eq:deriv-energy}
\end{equation}
\label{prop:energy}
\end{proposition}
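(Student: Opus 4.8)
The plan is to differentiate $\cae_{\sigma,\epsilon}$ in time and to recognize the result as the duality product of $\p_t n_{\sg,\epsilon}$ with the combined chemical potential $\mu := \vp_{\sg,\epsilon} + \psi'_{+,\epsilon}(n_{\sg,\epsilon})$, after which the weak form~\eqref{eq:limit-reg-rel-sys} of the first equation produces the stated dissipation. To organize the computation I set $w := n_{\sg,\epsilon} - \f\sg\gamma\vp_{\sg,\epsilon}$, which by Theorem~\ref{th:existence-regularized} satisfies $w \in L^2(0,T;H^2(\Omega))$ and $\p_t w \in L^2(0,T;(H^1(\Omega))')$, and I write $\cae_{\sigma,\epsilon} = \int_\Omega \psi_{+,\epsilon}(n_{\sg,\epsilon}) + \f\gamma2|\nabla w|^2 + \f\sg{2\gamma}|\vp_{\sg,\epsilon}|^2 + \psi_-(w)$. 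Differentiating term by term, the two potential contributions give $\langle \p_t n_{\sg,\epsilon}, \psi'_{+,\epsilon}(n_{\sg,\epsilon})\rangle$ and $\langle \p_t w, \psi'_-(w)\rangle$ through the chain rule for Sobolev functions; this is legitimate because $\psi_{+,\epsilon},\psi_-\in C^2$ have bounded second derivative, so $\psi'_{+,\epsilon}(n_{\sg,\epsilon})$ and $\psi'_-(w)$ lie in $L^2(0,T;H^1(\Omega))$. The quadratic $L^2$ term in $\vp_{\sg,\epsilon}$ yields $\f\sg\gamma\langle\p_t\vp_{\sg,\epsilon},\vp_{\sg,\epsilon}\rangle$ by the standard Gelfand-triple identity.

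The crux is the Dirichlet term $\f{d}{dt}\f\gamma2\|\nabla w\|_{L^2}^2$, and here the elliptic equation is the essential tool. Combining $w = n_{\sg,\epsilon}-\f\sg\gamma\vp_{\sg,\epsilon}$ with the second line of~\eqref{eq:reg_pb}, the terms $\sg\Delta\vp_{\sg,\epsilon}$ cancel and one obtains the pointwise identity $-\gamma\Delta w = \vp_{\sg,\epsilon} - \psi'_-(w)$. This has two consequences. First, it shows $-\gamma\Delta w \in L^2(0,T;H^1(\Omega))$, so the pairing $\langle \p_t w, -\gamma\Delta w\rangle$ is meaningful and $\f{d}{dt}\f\gamma2\|\nabla w\|^2 = \langle \p_t w, -\gamma\Delta w\rangle = \langle \p_t w, \vp_{\sg,\epsilon} - \psi'_-(w)\rangle$, the integration by parts carrying no boundary term because $\f{\p w}{\p\nu}=0$ by~\eqref{eq:bound_reg}. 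Second, the piece $-\langle\p_t w,\psi'_-(w)\rangle$ exactly cancels the contribution $\langle\p_t w,\psi'_-(w)\rangle$ coming from $\f{d}{dt}\int_\Omega\psi_-(w)$. Collecting the survivors, $\f{d}{dt}\cae_{\sigma,\epsilon} = \langle\p_t n_{\sg,\epsilon},\psi'_{+,\epsilon}(n_{\sg,\epsilon})\rangle + \langle\p_t w,\vp_{\sg,\epsilon}\rangle + \f\sg\gamma\langle\p_t\vp_{\sg,\epsilon},\vp_{\sg,\epsilon}\rangle$. Since $\p_t w = \p_t n_{\sg,\epsilon} - \f\sg\gamma\p_t\vp_{\sg,\epsilon}$, the last two products recombine into $\langle\p_t n_{\sg,\epsilon},\vp_{\sg,\epsilon}\rangle$, leaving $\f{d}{dt}\cae_{\sigma,\epsilon} = \langle\p_t n_{\sg,\epsilon}, \vp_{\sg,\epsilon} + \psi'_{+,\epsilon}(n_{\sg,\epsilon})\rangle = \langle\p_t n_{\sg,\epsilon},\mu\rangle$.

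To conclude I use $\mu = \vp_{\sg,\epsilon} + \psi'_{+,\epsilon}(n_{\sg,\epsilon})$ as a test function in the weak form~\eqref{eq:limit-reg-rel-sys}, which is admissible since $\mu\in L^2(0,T;H^1(\Omega))$ (again from $\vp_{\sg,\epsilon}\in H^1(\Omega)$ and the Lipschitz regularity of $\psi'_{+,\epsilon}$). This gives $\langle\p_t n_{\sg,\epsilon},\mu\rangle = -\int_\Omega B_\epsilon(n_{\sg,\epsilon})|\nabla\mu|^2$, the flux boundary condition $B_\epsilon\f{\p\mu}{\p\nu}=0$ ensuring no boundary contribution, whence $\f{d}{dt}\cae_{\sigma,\epsilon} = -\int_\Omega B_\epsilon(n_{\sg,\epsilon})|\nabla(\vp_{\sg,\epsilon}+\psi'_{+,\epsilon}(n_{\sg,\epsilon}))|^2\le 0$ because $B_\epsilon\ge b_1>0$ by~\eqref{eq:bn-assumption}.

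I expect the main obstacle to be precisely the rigorous differentiation of $\f\gamma2\|\nabla w\|^2$ at the low regularity $\p_t w\in (H^1(\Omega))'$: the energy is arranged so that the elliptic equation upgrades $-\gamma\Delta w$ to an $H^1$-function, which is exactly what renders every duality pairing above well-defined. A secondary technical point worth stating carefully is the validity of the chain-rule identities $\f{d}{dt}\int_\Omega\psi_{+,\epsilon}(n_{\sg,\epsilon}) = \langle\p_t n_{\sg,\epsilon},\psi'_{+,\epsilon}(n_{\sg,\epsilon})\rangle$ and its analogue for $\psi_-(w)$; these follow from the globally $C^2$ regularized potentials together with the parabolic regularity of Theorem~\ref{th:existence-regularized}, and a fully rigorous argument may first establish the identity for the smooth-in-time Galerkin approximants of the existence proof and then pass to the limit, or invoke a time-mollified version of the Lions--Magenes chain-rule lemma.
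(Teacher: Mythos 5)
Your proof is correct and is essentially the paper's own argument run in the reverse direction: the paper multiplies the first equation by $\vp_{\sg,\epsilon}+\psi'_{+,\epsilon}(n_{\sg,\epsilon})$ and reassembles the left-hand side into $\f{d}{dt}\cae_{\sigma,\epsilon}$, using exactly the same splitting $\int_\Omega\vp_{\sg,\epsilon}\p_t n_{\sg,\epsilon}=\int_\Omega\vp_{\sg,\epsilon}\p_t(n_{\sg,\epsilon}-\f\sg\gamma\vp_{\sg,\epsilon})+\f{d}{dt}\int_\Omega\f\sg{2\gamma}|\vp_{\sg,\epsilon}|^2$ and the same substitution of the elliptic equation for $\vp_{\sg,\epsilon}$. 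Your version is somewhat more explicit about the functional-analytic justification of the chain rules and duality pairings, but the underlying computation is identical.
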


As a consequence, we obtain a first a priori estimate 
\begin{equation}
\cae_{\sigma,\epsilon} [n_{\sg,\epsilon}(T)] +\int_0^T  \int_\Omega B_\epsilon(n_{\sg,\epsilon}) \big|\nabla (\vp_{\sg,\epsilon} + \psi'_{+,\epsilon}(n_{\sg,\epsilon}))\big|^2 = \cae_{\sigma,\epsilon} [n^0].
\label{eq:energy-0}
\end{equation}

\begin{proof}
To establish the energy of the regularized system, we begin with multiplying the first equation of~\eqref{eq:reg_pb} by $\vp_{\sg,\epsilon} + \psi_{+,\epsilon}^\prime(n_{\sg,\epsilon})$. Then, we integrate on the domain  $\Omega$  and use the second boundary condition~\eqref{eq:bound_reg} to obtain
\[
\int_\Omega  [ \vp_{\sg,\epsilon}  +  \psi'_{+,\epsilon}(n_{\sg,\epsilon}) ]\p_t n_{\sg,\epsilon}  = - \int_\Omega B_\epsilon(n_{\sg,\epsilon})  |\nabla ( \vp_{\sg,\epsilon} + \psi_{+,\epsilon}' (n_{\sg,\epsilon}))|^2 .
\]
Since $ \psi'_{+,\epsilon}(n_{\sg,\epsilon}) \p_t n_{\sg,\epsilon}  =  \p_t \psi_{+,\epsilon}(n_{\sg,\epsilon}) $, to retrieve the energy equality~\eqref{eq:deriv-energy} we need to focus on the calculation of $\int_\Omega \vp_{\sg,\epsilon} \p_t n_{\sg,\epsilon}$.  We write 
\[
\int_\Omega  \vp_{\sg,\epsilon}\p_t n_{\sg,\epsilon}  = \int_\Omega  \vp_{\sg,\epsilon}\p_t [n_{\sg,\epsilon} -\f{\sigma}{\gamma}\vp_{\sg,\epsilon}]
+  \frac{d}{dt}\int_\Omega   \f{\sg}{2 \gamma} |\vp_{\sg,\epsilon}|^2,
\]
and using the second equation of \eqref{eq:reg_pb}, we rewrite the first term as 
\[ \begin{aligned}
\int_\Omega \vp_{\sg,\epsilon}\p_t [n_{\sg,\epsilon} -\f{\sigma}{\gamma}\vp_{\sg,\epsilon}] 
&=\int_\Omega [- \gamma \Delta (n_{\sg,\epsilon} -\f{\sigma}{\gamma}\vp_{\sg,\epsilon}) + \psi_-^\prime(n_{\sg,\epsilon} - \f \sg \gamma \vp_{\sg,\epsilon}) ] \p_t [n_{\sg,\epsilon} -\f{\sigma}{\gamma}\vp_{\sg,\epsilon}] 
\\
& = \f{d}{dt} \int_\Omega \f \gamma 2 \nabla (n_{\sg,\epsilon}-\f{\sigma}{\gamma}\vp_{\sg,\epsilon})|^2 + \psi_-(n_{\sg,\epsilon}-\f{\sg}{\gamma}\vp_{\sg,\epsilon}),
\end{aligned} \]
where we have used the first boundary condition~\eqref{eq:bound_reg}.

Altogether, we have recovered the expression~\eqref{eq:energy} and the equality~\eqref{eq:deriv-energy}.
\end{proof}

\medskip

We can now turn to the entropy inequality. It is classical to define the mapping $\phi_\epsilon: [0,\infty)\mapsto [0,\infty)$ 
\begin{equation}
\phi^{\prime \prime}_\epsilon(n) = \f{1}{B_\epsilon(n)}, \qquad  \phi_\epsilon(0) = \phi_\epsilon'(0)=0,
\label{eq:assumption-entropy}
\end{equation}
 which is  well defined because $B_\epsilon \in C(\mathbb{R}, \mathbb{R}^+)$ from~\eqref{eq:bn-assumption}.  For a nonnegative function $n(x)$, we define the entropy as 
\[
\Phi_{\epsilon}[n] = \int_\Omega \phi_\epsilon\big(n(x)\big)dx.
\]

\begin{proposition}[Entropy] 
Consider a solution of~\eqref{eq:reg_pb}--\eqref{eq:bound_reg} defined by Theorem~\ref{th:existence-regularized}, then the entropy of the system satisfies
\begin{equation}
\begin{aligned}
\f{ d \Phi_{\epsilon}[n_{\sg,\epsilon}(t)]}{dt} = -\int_\Omega \gamma \left| \Delta\left( n_{\sg,\epsilon} - \f{\sg}{\gamma} \vp_{\sg,\epsilon}\right)\right|^2 + \f{\sg}{\gamma}|\nabla \vp_{\sg,\epsilon}|^2 
&+ \psi''_-(n_{\sg,\epsilon} - \f{\sg}{\gamma} \vp_{\sg,\epsilon}) \left| \nabla(n_{\sg,\epsilon} - \f{\sg}{\gamma}\vp_{\sg,\epsilon}) \right|^2 \\
&+ \psi^{\prime\prime}_{+,\epsilon}(n_{\sg,\epsilon})|\nabla n_{\sg,\epsilon}|^2.
\end{aligned}
\label{eq:entropy}
\end{equation}
\end{proposition}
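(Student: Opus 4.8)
The plan is to differentiate $\Phi_\epsilon$ along the flow and then integrate by parts several times, exploiting the defining relation $\phi_\epsilon''=1/B_\epsilon$ to cancel the mobility and a reformulation of the elliptic equation to produce the remaining terms. Write $w:=n_{\sg,\epsilon}-\f\sg\gamma\vp_{\sg,\epsilon}$, which belongs to $L^2(0,T;H^2(\Omega))$ by Theorem~\ref{th:existence-regularized} and satisfies $\f{\p w}{\p\nu}=0$ by the first boundary condition in~\eqref{eq:bound_reg}. First I would test the first equation of~\eqref{eq:reg_pb} against $\phi_\epsilon'(n_{\sg,\epsilon})$; since $\phi_\epsilon''=1/B_\epsilon$ is bounded by~\eqref{eq:bn-assumption}, the function $\phi_\epsilon'$ is Lipschitz, so $\phi_\epsilon'(n_{\sg,\epsilon})\in L^2(0,T;H^1(\Omega))$ is an admissible test function and the chain rule gives $\f{d}{dt}\Phi_\epsilon[n_{\sg,\epsilon}]=\langle\p_t n_{\sg,\epsilon},\phi_\epsilon'(n_{\sg,\epsilon})\rangle$. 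Integrating by parts and using $\nabla\phi_\epsilon'(n_{\sg,\epsilon})=\phi_\epsilon''(n_{\sg,\epsilon})\nabla n_{\sg,\epsilon}$ together with $\phi_\epsilon''(n_{\sg,\epsilon})B_\epsilon(n_{\sg,\epsilon})=1$, the mobility cancels and I obtain
\[
\f{d}{dt}\Phi_\epsilon[n_{\sg,\epsilon}]=-\int_\Omega \nabla n_{\sg,\epsilon}\cdot\nabla\big(\vp_{\sg,\epsilon}+\psi_{+,\epsilon}'(n_{\sg,\epsilon})\big).
\]

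Expanding the gradient and using $\nabla\psi_{+,\epsilon}'(n_{\sg,\epsilon})=\psi_{+,\epsilon}''(n_{\sg,\epsilon})\nabla n_{\sg,\epsilon}$ immediately splits off the term $-\int_\Omega\psi_{+,\epsilon}''(n_{\sg,\epsilon})|\nabla n_{\sg,\epsilon}|^2$, leaving the coupling term $-\int_\Omega\nabla n_{\sg,\epsilon}\cdot\nabla\vp_{\sg,\epsilon}$ to be handled. Here the key algebraic observation is that, substituting $n_{\sg,\epsilon}=w+\f\sg\gamma\vp_{\sg,\epsilon}$ into the second equation of~\eqref{eq:reg_pb}, the two occurrences of $-\sigma\Delta\vp_{\sg,\epsilon}$ cancel and the elliptic equation collapses to the clean identity $\vp_{\sg,\epsilon}=-\gamma\Delta w+\psi_-'(w)$. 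I would then write $\nabla n_{\sg,\epsilon}=\nabla w+\f\sg\gamma\nabla\vp_{\sg,\epsilon}$, which peels off $-\f\sg\gamma\int_\Omega|\nabla\vp_{\sg,\epsilon}|^2$ and reduces the remaining contribution to $-\int_\Omega\nabla w\cdot\nabla\vp_{\sg,\epsilon}$.

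For this last term I would integrate by parts (the boundary term vanishes because $\f{\p w}{\p\nu}=0$) to rewrite it as $\int_\Omega\vp_{\sg,\epsilon}\Delta w$, and then insert the identity $\vp_{\sg,\epsilon}=-\gamma\Delta w+\psi_-'(w)$. This produces $-\gamma\int_\Omega|\Delta w|^2$ directly, while a second integration by parts of $\int_\Omega\psi_-'(w)\Delta w$ (again with vanishing boundary term, using $\nabla\psi_-'(w)=\psi_-''(w)\nabla w$) yields $-\int_\Omega\psi_-''(w)|\nabla w|^2$. Collecting the four contributions gives exactly~\eqref{eq:entropy}.

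The main obstacle is not the algebra but its rigorous justification: the computation above is formal and genuinely requires the $H^2$-regularity of $w$ from Theorem~\ref{th:existence-regularized} for the two successive integrations by parts (in particular to give meaning to $\int_\Omega\psi_-'(w)\Delta w$ and to $\int_\Omega|\Delta w|^2$), as well as a chain-rule argument to justify $\f{d}{dt}\Phi_\epsilon[n_{\sg,\epsilon}]=\langle\p_t n_{\sg,\epsilon},\phi_\epsilon'(n_{\sg,\epsilon})\rangle$ for a solution with only $\p_t n_{\sg,\epsilon}\in L^2(0,T;(H^1(\Omega))')$. I expect the cleanest route is to carry out the identity on the Galerkin approximants of the previous proof, where everything is smooth in space, and then pass to the limit using the convergences~\eqref{eq:semi-discr-conv-strong-n}--\eqref{eq:semi-discr-conv-strong-n-vp}; the boundedness of $\phi_\epsilon''$ and of $\psi_-''$ from~\eqref{eq:psi-minus} keeps all integrands controlled during this passage.
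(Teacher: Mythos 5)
Your proposal is correct and follows essentially the same route as the paper: cancel the mobility via $\phi_\epsilon''=1/B_\epsilon$, split off the $\psi_{+,\epsilon}''$ and $\f{\sg}{\gamma}|\nabla\vp_{\sg,\epsilon}|^2$ terms, rewrite the elliptic equation as $\vp_{\sg,\epsilon}=-\gamma\Delta w+\psi_-'(w)$ with $w=n_{\sg,\epsilon}-\f{\sg}{\gamma}\vp_{\sg,\epsilon}$, and integrate by parts twice. Your closing remark on justifying the chain rule and the integrations by parts via the Galerkin approximants is actually more careful than the paper, which performs the computation directly at the level of the weak solution.
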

Notice that the dissipation terms are all well defined by our definition of solution in Theorem~\ref{th:existence-regularized}. However, 
the equality \eqref{eq:entropy} does not provide us with a direct a priori estimate because of the negative term $\psi^{\prime \prime}_-$, therefore we have to combine it with the energy identity to write  
\[
\begin{aligned}
\Phi_{\epsilon}[n_{\sg,\epsilon}(T)] + \int_{\Omega_T} & \left[ \gamma \left| \Delta\left( n_{\sg,\epsilon} - \f{\sg}{\gamma} \vp_{\sg,\epsilon}\right)\right|^2 + \f{\sg}{\gamma}|\nabla \vp_{\sg,\epsilon}|^2 + \psi^{\prime\prime}_{+,\epsilon}(n_{\sg,\epsilon})|\nabla n_{\sg,\epsilon}|^2\right] 
\\
&\leq  \Phi_{\epsilon}[n^0] + \frac{2T}{\gamma} \| \psi''_- \|_\infty \;  \cae_{\sg,\epsilon}[n^0].
\end{aligned}
\]

\begin{proof}
We compute, using the definition of $\phi^{\prime \prime}_\epsilon$, 
\begin{equation}
\begin{aligned}
 \int_\Omega \p_t \phi_\epsilon(n_{\sg,\epsilon})
 &= \int_\Omega \p_t n_{\sg,\epsilon} \phi^\prime_\epsilon(n_{\sg,\epsilon}) \\
 &= \int_\Omega  \nabla \cdot \left[ B_\epsilon(n_{\sg,\epsilon}) \nabla(\vp_{\sg,\epsilon} + \psi_{+,\epsilon}^\prime(n_{\sg,\epsilon}) ) \right] \phi^\prime_\epsilon(n_{\sg,\epsilon}) \\
&= - \int_\Omega B_\epsilon(n_{\sg,\epsilon}) \nabla(\vp_{\sg,\epsilon} + \psi_{+,\epsilon}^\prime(n_{\sg,\epsilon}) )  \phi^{\prime\prime}_\epsilon(n_{\sg,\epsilon}) \nabla n_{\sg,\epsilon} \\
&= - \int_\Omega  \nabla(\vp_{\sg,\epsilon} + \psi_{+,\epsilon}^\prime(n_{\sg,\epsilon}) ) \nabla n_{\sg,\epsilon}
 \\
&= - \int_\Omega  \nabla \vp_{\sg,\epsilon} \nabla( n_{\sg,\epsilon} - \f \sg \gamma  \vp_{\sg,\epsilon} ) + \psi_{+,\epsilon}^{\prime \prime}(n_{\sg,\epsilon})  |\nabla n_{\sg,\epsilon}|^2 +\f \sg \gamma  | \nabla \vp_{\sg,\epsilon}|^2 .  
\end{aligned}
\label{eq:entropy-reg-debut}
\end{equation} 
To rewrite the term $\int_\Omega  \nabla \vp_{\sg,\epsilon} \nabla( n_{\sg,\epsilon} - \f \sg \gamma  \vp_{\sg,\epsilon} ) $, we use the second equation of the regularized system~\eqref{eq:reg_pb}
\begin{equation}
\vp_{\sg,\epsilon} = -\gamma \Delta\left(n_{\sg,\epsilon} - \f{\sg}{\gamma}\vp_{\sg,\epsilon}\right) + \psi_-^\prime(n_{\sg,\epsilon}-\f{\sg}{\gamma}\vp_{\sg,\epsilon}).
\label{eq:rewrite-2-reg}
\end{equation}
Using  \eqref{eq:rewrite-2-reg} and the boundary condition~\eqref{eq:bound_reg}, we can  rewrite the term under consideration as 
\[
\begin{aligned}
\int_\Omega \vp_{\sg,\epsilon} \Delta\left(n_{\sg,\epsilon} - \f{\sg}{\gamma}\vp_{\sg,\epsilon} \right) &=  \int_\Omega -\gamma \left| \Delta\left( n_{\sg,\epsilon} - \f{\sg}{\gamma} \vp_{\sg,\epsilon}\right)\right|^2 + \psi^{\prime}_-(n_{\sg,\epsilon} - \f{\sg}{\gamma} \vp_{\sg,\epsilon})\Delta\left(n_{\sg,\epsilon} - \f{\sg}{\gamma}\vp_{\sg,\epsilon} \right)
\\
&= - \int_\Omega \gamma \left| \Delta\left( n_{\sg,\epsilon} - \f{\sg}{\gamma} \vp_{\sg,\epsilon}\right)\right|^2 
+  \psi''_-(n_{\sg,\epsilon} - \f{\sg}{\gamma}\vp_{\sg,\epsilon})  \left| \nabla (n_{\sg,\epsilon} - \f{\sg}{\gamma}\vp_{\sg,\epsilon} ) \right|^2.
\end{aligned} \]
Injecting this equality into \eqref{eq:entropy-reg-debut}, we obtain the identity~\eqref{eq:entropy}.
\end{proof}

\subsection{Inequalities}
\label{sec:inequalities}
From the energy and entropy properties, we can conclude the following a priori bounds, where we assume that the initial data has finite energy and entropy, 
\begin{equation}
    \f{\sg}{2\gamma}  \int_\Omega \left|\vp_{\sg,\epsilon}(t)\right|^2 \leq \cae_{\sigma,\epsilon} [n^0], \qquad \forall  t \geq 0,
    \label{eq:vp-est}
 \end{equation}
 \begin{equation}
 \f{\sg}{\gamma} \int^T_0 \int_\Omega  |\nabla \vp_{\sg,\epsilon}|^2 
 \leq \Phi_{\epsilon}[n^0] + \frac{2T}{\gamma} \| \psi^{\prime \prime}_-\|_{\infty} \cae_{\sg,\epsilon}[n^0], \qquad \forall T\geq 0 ,
 \label{eq:grad-vp-est}
 \end{equation}
 \begin{equation}
    \f{\gamma}{2} \int_\Omega \left|\nabla \big(n_{\sg,\epsilon}(t)-\f{\sigma}{\gamma}\vp_{\sg,\epsilon}(t)\big) \right|^2 \leq \cae_{\sigma,\epsilon} [n^0], \qquad \forall  t \geq 0,
    \label{eq:grad-est}
 \end{equation}
 \begin{equation}
 \int_0^T \int_\Omega \left| \Delta\big( n_{\sg,\epsilon} - \f{\sg}{\gamma} \vp_{\sg,\epsilon}\big)\right|^2 \leq \Phi_{\epsilon}[n^0] + \frac{2T}{\gamma}\| \psi^{\prime \prime}_-\|_{\infty}  \cae_{\sg,\epsilon}[n^0], \qquad \forall T\geq 0,
 \label{eq:lap-est}
 \end{equation}
 \begin{equation}
 \int_0^T  \int_\Omega B_\epsilon(n_{\sg,\epsilon}) \big|\nabla (\vp_{\sg,\epsilon} + \psi'_{+,\epsilon}(n_{\sg,\epsilon}))\big|^2 \le \cae_{\sigma,\epsilon} [n^0], \qquad \forall T\geq 0.
 \label{eq:flux-est}
 \end{equation}

 \begin{proposition}[Compactness of time derivatives]
    Consider a solution $(n_{\sg,\epsilon}, \vp_{\sg,\epsilon})$ of~\eqref{eq:reg_pb}--\eqref{eq:bound_reg} defined by Theorem~\ref{th:existence-regularized}, then, the following inequalities hold for $\sigma$ small enough
    \begin{align}
    ||\p_t n_{\sg,\epsilon}||_{L^2(0,T;(H^1(\Omega))^\prime)} &\le C, \label{eq:control-deriv-n}\\
    ||\p_t \left( n_{\sg,\epsilon} -\f\sg\gamma \vp_{\sg,\epsilon}\right)||_{L^2(0,T;(H^1(\Omega))^\prime)} &\le \AP{C(\sigma)}.\label{eq:control-deriv-n-vp}
    \end{align}
    \label{prop:control-time-deriv}
\end{proposition}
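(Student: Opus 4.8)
The plan is to treat the two bounds separately: the first follows directly from the flux estimate, while the second requires us to invert the elliptic relation defining $\vp_{\sg,\epsilon}$ and to exploit the smallness condition in~\eqref{eq:psi-minus}.

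For~\eqref{eq:control-deriv-n}, I would test the first equation of~\eqref{eq:reg_pb} against an arbitrary $\chi \in H^1(\Omega)$ and split the mobility symmetrically as $B_\epsilon = B_\epsilon^{1/2}\cdot B_\epsilon^{1/2}$; Cauchy--Schwarz then gives
\[
|\langle \p_t n_{\sg,\epsilon}, \chi\rangle| \le \Big(\int_\Omega B_\epsilon(n_{\sg,\epsilon})\big|\nabla(\vp_{\sg,\epsilon}+\psi'_{+,\epsilon}(n_{\sg,\epsilon}))\big|^2\Big)^{1/2}\Big(\int_\Omega B_\epsilon(n_{\sg,\epsilon})|\nabla\chi|^2\Big)^{1/2}.
\]
The uniform upper bound $B_\epsilon \le B_1$ from~\eqref{eq:bn-assumption} controls the second factor by $B_1^{1/2}\|\chi\|_{H^1(\Omega)}$, so that $\|\p_t n_{\sg,\epsilon}(t)\|_{(H^1(\Omega))'}^2 \le B_1 \int_\Omega B_\epsilon(n_{\sg,\epsilon})|\nabla(\vp_{\sg,\epsilon}+\psi'_{+,\epsilon}(n_{\sg,\epsilon}))|^2$. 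Integrating in time and invoking the flux estimate~\eqref{eq:flux-est} yields~\eqref{eq:control-deriv-n} with $C^2 = B_1\,\cae_{\sg,\epsilon}[n^0]$.

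For~\eqref{eq:control-deriv-n-vp}, the idea is to express $\p_t n_{\sg,\epsilon}$ through $U:=\p_t(n_{\sg,\epsilon}-\f\sg\gamma\vp_{\sg,\epsilon})$ alone. Setting $w=n_{\sg,\epsilon}-\f\sg\gamma\vp_{\sg,\epsilon}$, the elliptic equation in~\eqref{eq:reg_pb} rewrites as $\vp_{\sg,\epsilon}=-\gamma\Delta w+\psi'_-(w)$, while $\vp_{\sg,\epsilon}=\f\gamma\sg(n_{\sg,\epsilon}-w)$ by the definition of $w$; eliminating $\vp_{\sg,\epsilon}$ gives the pointwise relation $n_{\sg,\epsilon}=w-\sg\Delta w+\f\sg\gamma\psi'_-(w)$. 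Differentiating in time produces
\[
\p_t n_{\sg,\epsilon} = U - \sg\Delta U + \f\sg\gamma\,\psi''_-(w)\,U,
\]
whose weak form $\langle\p_t n_{\sg,\epsilon},\chi\rangle = \int_\Omega U\chi + \sg\int_\Omega\nabla U\cdot\nabla\chi + \f\sg\gamma\int_\Omega\psi''_-(w)U\chi$ identifies $\p_t n_{\sg,\epsilon}=\mathcal A_t\,U$, where $\mathcal A_t:H^1(\Omega)\to(H^1(\Omega))'$ is the elliptic operator associated with the symmetric bilinear form on the right-hand side.

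The crux is the coercivity of $\mathcal A_t$. Testing with $\chi=U$ gives $\langle\mathcal A_t U,U\rangle = \|U\|_{L^2}^2 + \sg\|\nabla U\|_{L^2}^2 + \f\sg\gamma\int_\Omega\psi''_-(w)|U|^2$, and the last, sign-indefinite term is absorbed precisely because of the assumption $\f\sg\gamma\|\psi''_-\|_\infty<1$ in~\eqref{eq:psi-minus}: with $\kappa:=1-\f\sg\gamma\|\psi''_-\|_\infty>0$ one obtains $\langle\mathcal A_t U,U\rangle \ge \min(\sg,\kappa)\|U\|_{H^1}^2$. Hence $\mathcal A_t$ is a uniform (in $t$) isomorphism by Lax--Milgram, and inverting it yields $\|U(t)\|_{H^1} \le \min(\sg,\kappa)^{-1}\|\p_t n_{\sg,\epsilon}(t)\|_{(H^1(\Omega))'}$; squaring, integrating in time and using~\eqref{eq:control-deriv-n} gives a bound even in $L^2(0,T;H^1(\Omega))$, which is stronger than~\eqref{eq:control-deriv-n-vp} since $L^2(\Omega)\hookrightarrow(H^1(\Omega))'$. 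I expect the main obstacle to be the rigorous justification of the time differentiation and of testing with $U$, as a priori $U$ lies only in a dual space; exactly as in the proof of Theorem~\ref{th:existence-regularized}, this is cleanly handled by performing the computation on the Galerkin system~\eqref{eq:init-val1}--\eqref{eq:init-val2}, where $U^N$ is smooth in time and finite-dimensional in space, deriving the bound uniformly in $N$, and passing to the limit. One should note that the constant degenerates like $\sg^{-1}$ as $\sg\to0$, which is harmless here since the statement is for fixed $\sg$.
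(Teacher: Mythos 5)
Your proof is correct and follows essentially the same route as the paper: the bound \eqref{eq:control-deriv-n} comes from testing the first equation against $\chi$ and invoking the flux estimate \eqref{eq:flux-est} together with $B_\epsilon\le B_1$, and \eqref{eq:control-deriv-n-vp} from differentiating the elliptic equation in time, testing with $U=\p_t(n_{\sg,\epsilon}-\f\sg\gamma\vp_{\sg,\epsilon})$, and absorbing the indefinite $\psi''_-$ term via the assumption $\f\sg\gamma\|\psi''_-\|_\infty<1$. If anything, your final duality step is more careful than the paper's, which bounds $\int_{\Omega_T}\p_t n_{\sg,\epsilon}\,U$ by $\|\p_t n_{\sg,\epsilon}\|_{L^2(\Omega_T)}\|U\|_{L^2(\Omega_T)}$ --- a quantity not controlled by the available estimates --- whereas you correctly pair against the $(H^1(\Omega))'$ norm and retain the $\sg\|\nabla U\|^2$ term for full $H^1$ coercivity, which even yields the stronger conclusion $U\in L^2(0,T;H^1(\Omega))$.
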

\begin{proof}
    For any test function $\chi\in L^2(0,T;H^1(\Omega))$ we obtained from \eqref{eq:flux-est}
    \[
        \begin{aligned}
            \left|\int_{\Omega_T} \p_t n_{\sg,\epsilon} \chi  \right| &= \left| \int_{\Omega_T}B_\epsilon(n_{\sg,\epsilon}) \nabla\left(\vp_{\sg,\epsilon}+\psi^\prime_{+,\epsilon}(n_{\sg,\epsilon}) \right) \nabla \chi\right|\\
            &\le \left( \int_{\Omega_T} \left|B_\epsilon(n_{\sg,\epsilon}) \nabla\left(\vp_{\sg,\epsilon} +\psi^\prime_{+,\epsilon}(n_{\sg,\epsilon}) \right) \right|^2 \right)^{1/2} || \nabla \chi ||_{L^2(\Omega_T)},\\
            &\le C || \nabla \chi ||_{L^2(\Omega_T)}.
        \end{aligned}  
    \]
This proves \eqref{eq:control-deriv-n}. \par
To prove \eqref{eq:control-deriv-n-vp}, we compute the time derivative of equation for $\vp_{\sg,\epsilon} $ in the distribution sense
\[
    \sg \int_{\Omega_T} \nabla U_{\sg,\epsilon} \nabla \chi + \int_{\Omega_T} U_{\sg,\epsilon} \chi = \int_{\Omega_T} \p_t n_{\sg,\epsilon}\chi -\f{\sg}{\gamma}\int_{\Omega_T} U_{\sg,\epsilon} \psi^{\prime\prime}_-(n_{\sg,\epsilon}-\f\sg\gamma \vp_{\sg,\epsilon})\chi,
\]
where $U_{\sg,\epsilon} = \p_t\left(n_{\sg,\epsilon}-\f\sg\gamma\vp_{\sg,\epsilon} \right)$ and we have used the fact that $\left(n_{\sg,\epsilon}-\f\sg\gamma\vp_{\sg,\epsilon} \right)$, $n_{\sg,\epsilon}$ and $\psi^{\prime}_-(n_{\sg,\epsilon}-\f\sg\gamma \vp_{\sg,\epsilon})$ are smooth.
Then, we can choose $\chi = U_{\sg,\epsilon}$, to obtain
\[
    \sg \int_{\Omega_T} |\nabla U_{\sg,\epsilon}|^2 + \int_{\Omega_T} |U_{\sg,\epsilon}|^2 = \int_{\Omega_T} \p_t n_{\sg,\epsilon} U_{\sg,\epsilon} -\f{\sg}{\gamma}\int_{\Omega_T} |U_{\sg,\epsilon}|^2 \psi^{\prime\prime}_-(n_{\sg,\epsilon}-\f\sg\gamma \vp_{\sg,\epsilon}).
\]
Using the fact that $\f\sg\gamma ||\psi_-^{\prime\prime}||_\infty<1$ from \eqref{eq:psi-minus}, the Cauchy-Schwarz inequality gives 
\[
   \AP{\sigma} ||\nabla U_{\sg,\epsilon}||^2_{L^2(\Omega_T)} + \alpha ||U_{\sg,\epsilon}||^2_{L^2(\Omega_T)} \le ||\p_t n_{\sg,\epsilon}||_{L^2(\Omega_T)} ||U_{\sg,\epsilon}||_{L^2(\Omega_T)},
\]
where $\alpha = 1-\f\sg\gamma \|\psi_-^{\prime\prime}\|_\infty>0$. Altogether, we obtain the bound~\eqref{eq:control-deriv-n-vp}.
\end{proof}

\section{Existence: convergence as $\epsilon \to 0$}
\label{sec:existence}

The next step is to prove the existence of global weak solutions for the RDCH system~\eqref{eq:CH-relax} by letting $\e$ vanish.  This means that for all test functions $\chi \in L^2(0,T;H^1(\Omega))\cap L^\infty(\Omega_T)$ with  $\nabla \chi  \cdot \nu =0$ on $\p \Omega \times (0,T)$, it holds
\[
        \begin{aligned}
            \int_0^T <\chi,\p_t n_\sigma> &= \int_{\Omega_T} b(n_\sigma) \nabla\left(\vp_\sigma + \psi_+^\prime(n_\sigma)\right)\nabla \chi ,\\
            \sigma \int_{\Omega_T} \nabla \vp_\sigma \nabla \chi + \int_{\Omega_T} \vp_\sigma \chi & = \gamma \int_{\Omega_T} \nabla n_\sigma \nabla \chi + \int_{\Omega_T} \psi_-^\prime(n_\sigma-\frac{\sigma}{\gamma} \vp_\sigma) \chi.
        \end{aligned}
\]
We establish the following
\begin{theorem} [Existence for $\sg >0, \, \e=0$] \label{th:existence}
Assume an initial condition satisfying $0\leq n^0 \leq 1$, with finite energy and entropy. Then, for $\sg$ small enough,  there exists a global weak solution  $(n_\sigma,\vp_\sigma)$ of the RDCH  equation~\eqref{eq:CH-relax}, \eqref{eq:CH-bound-relax} such that
    \begin{align}
        n_\sigma& \in L^2(0,T;H^1(\Omega)), \qquad  \p_t n_\sigma  \in  L^2\big(0,T;(H^1(\Omega))^\prime \big). 
        \\
        \vp_\sigma& \in L^2(0,T;H^1(\Omega)),\\
        n_\sigma - \f\sg\gamma \vp_\sg &\in L^2(0,T;H^2(\Omega)), \qquad  \p_t \left(n_\sigma-\f\sg\gamma \vp_\sg \right)  \in  L^2\big(0,T;(H^1(\Omega))^\prime \big). 
    \end{align}
    \begin{equation}
        0\le n_\sigma \leq  1, \qquad \text{ a.e. in }  \Omega_T,
        \label{eq:threshold}
    \end{equation}
 and $n_\sg <1$ a.e. if $b$ vanishes fast enough at $1$ so that $\phi(1)=\infty$ (see \eqref{eq:assumption-entropy_L}).
\end{theorem}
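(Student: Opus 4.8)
The plan is to pass to the limit $\e \to 0$ in the regularized solutions $(n_{\sg,\e}, \vp_{\sg,\e})$ provided by Theorem~\ref{th:existence-regularized}, relying on the a priori bounds \eqref{eq:vp-est}--\eqref{eq:flux-est} together with Proposition~\ref{prop:control-time-deriv} (valid under the standing smallness assumption \eqref{eq:psi-minus}). The first task is to check that these bounds are uniform in $\e$. The initial energy $\cae_{\sg,\e}[n^0]$ and entropy $\Phi_\e[n^0]$ are controlled by their unregularized counterparts, which are finite by hypothesis, because $\psi_{+,\e}$ agrees with $\psi_+$ away from $n=1$ and $\phi_\e'' = 1/B_\e \le 1/b = \phi''$ pointwise, so that $\Phi_\e[n^0] \le \Phi[n^0]$ and $\cae_{\sg,\e}[n^0] \le \cae_\sg[n^0]$ up to harmless constants. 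Consequently the right-hand sides of \eqref{eq:vp-est}--\eqref{eq:flux-est} and of Proposition~\ref{prop:control-time-deriv} are bounded independently of $\e$, yielding uniform bounds for $n_{\sg,\e}$ in $L^2(0,T;H^1(\Omega))$, for $\vp_{\sg,\e}$ in $L^2(0,T;H^1(\Omega))$, for $n_{\sg,\e}-\f{\sg}{\gamma}\vp_{\sg,\e}$ in $L^2(0,T;H^2(\Omega))$, and for $\p_t n_{\sg,\e}$ and $\p_t(n_{\sg,\e}-\f{\sg}{\gamma}\vp_{\sg,\e})$ in $L^2(0,T;(H^1(\Omega))')$. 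By weak compactness I extract limits $(n_\sg,\vp_\sg)$ in these spaces; the Aubin--Lions lemma then upgrades the extraction to strong convergence of $n_{\sg,\e}$ in $L^2(0,T;L^2(\Omega))$ and of $n_{\sg,\e}-\f{\sg}{\gamma}\vp_{\sg,\e}$ in $L^2(0,T;H^1(\Omega))$, hence, along a further subsequence, to a.e. convergence in $\Omega_T$ of $n_{\sg,\e}$, of $n_{\sg,\e}-\f{\sg}{\gamma}\vp_{\sg,\e}$, and thus of $\vp_{\sg,\e}$.

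Next I would derive the threshold property \eqref{eq:threshold} from the entropy. Since $\phi_\e \ge 0$ and the combined energy--entropy estimate gives $\sup_{t\le T}\Phi_\e[n_{\sg,\e}(t)] \le C$ uniformly in $\e$, we have $\int_{\Omega_T}\phi_\e(n_{\sg,\e}) \le TC$. On $[0,\e]$ and $[1-\e,\infty)$ the density $\phi_\e$ is quadratic with curvature $1/b(\e)$ and $1/b(1-\e)$ respectively, so for every fixed $s<0$ one has $\phi_\e(s)=\f{1}{2b(\e)}s^2\to+\infty$, and for every fixed $s>1$ one has $\phi_\e(s)\ge \f{1}{2b(1-\e)}(s-(1-\e))^2\to+\infty$, because $b(\e),b(1-\e)\to0$. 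On the set $\{n_\sg>1\}$ (resp. $\{n_\sg<0\}$) the a.e. limit forces $\phi_\e(n_{\sg,\e})\to+\infty$, and Fatou's lemma contradicts the uniform bound unless this set is negligible; hence $0\le n_\sg\le1$ a.e. For the strict bound, when $\phi(1)=\infty$ I use $\phi_\e\nearrow\phi$ together with the a.e. convergence and Fatou to obtain
\[
\int_{\Omega_T}\phi(n_\sg)\le \liminf_{\e\to0}\int_{\Omega_T}\phi_\e(n_{\sg,\e})\le TC<\infty,
\]
so that $\phi(n_\sg)<\infty$ a.e., which excludes $n_\sg=1$ on a set of positive measure.

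The core of the argument, and the main obstacle, is passing to the limit in the flux $J_{\sg,\e}=-B_\e(n_{\sg,\e})\nabla(\vp_{\sg,\e}+\psi'_{+,\e}(n_{\sg,\e}))$, where the mobility degenerates while $\psi'_{+,\e}$ becomes singular near $n=1$. I split it as
\[
J_{\sg,\e} = -B_\e(n_{\sg,\e})\nabla\vp_{\sg,\e} - B_\e(n_{\sg,\e})\psi''_{+,\e}(n_{\sg,\e})\nabla n_{\sg,\e}.
\]
The decisive ingredient is the cancellation assumption \eqref{eq:assbpsi}: the coefficient $B_\e(n_{\sg,\e})\psi''_{+,\e}(n_{\sg,\e})$ is uniformly bounded and, by the a.e. convergence of $n_{\sg,\e}$, converges boundedly to $b(n_\sg)\psi''_+(n_\sg)$; likewise $B_\e(n_{\sg,\e})\to b(n_\sg)$ boundedly a.e. by continuity of the mobility. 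Testing against $\nabla\chi\in L^2(\Omega_T)$, the coefficients times $\nabla\chi$ converge strongly in $L^2$ by dominated convergence, while $\nabla\vp_{\sg,\e}\rightharpoonup\nabla\vp_\sg$ and $\nabla n_{\sg,\e}\rightharpoonup\nabla n_\sg$ weakly in $L^2$; the strong--weak pairing then gives
\[
\int_{\Omega_T} B_\e(n_{\sg,\e})\nabla(\vp_{\sg,\e}+\psi'_{+,\e}(n_{\sg,\e}))\,\nabla\chi \longrightarrow \int_{\Omega_T}\big(b(n_\sg)\nabla\vp_\sg + b(n_\sg)\psi''_+(n_\sg)\nabla n_\sg\big)\nabla\chi,
\]
which is precisely the weak flux $\int_{\Omega_T}b(n_\sg)\nabla(\vp_\sg+\psi'_+(n_\sg))\nabla\chi$ interpreted through $b\,\nabla\psi'_+(n_\sg)=b\,\psi''_+(n_\sg)\nabla n_\sg$. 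In the elliptic equation every term is linear except $\psi'_-(n_{\sg,\e}-\f{\sg}{\gamma}\vp_{\sg,\e})$, which converges by dominated convergence since $\psi'_-$ is bounded and continuous and its argument converges a.e. Finally, the time-derivative bounds yield $n_\sg,\ n_\sg-\f{\sg}{\gamma}\vp_\sg\in C([0,T];(H^1(\Omega))')$, so the initial datum $n^0$ is attained in the weak sense, and $(n_\sg,\vp_\sg)$ is a global weak solution with the announced regularity and bounds.
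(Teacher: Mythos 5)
Your proposal is correct and follows essentially the same route as the paper: uniform-in-$\e$ energy/entropy and time-derivative bounds, weak compactness plus Aubin--Lions for strong convergence, the entropy blow-up of $\phi_\e$ outside $[0,1]$ combined with Fatou to get $0\le n_\sg\le 1$ (and $n_\sg<1$ when $\phi(1)=\infty$), and the strong--weak pairing of $B_\e(n_{\sg,\e})$, $B_\e\psi''_{+,\e}(n_{\sg,\e})$ against $\nabla\vp_{\sg,\e}$, $\nabla n_{\sg,\e}$ via the cancellation assumption \eqref{eq:assbpsi} to identify the limit flux. The only notable additions are cosmetic: you explicitly check that $\cae_{\sg,\e}[n^0]$ and $\Phi_\e[n^0]$ are controlled by their unregularized counterparts, and you run the threshold argument directly on the limit set rather than on the approximating sets $\{n_{\sg,\e}\ge 1+\alpha\}$ as the paper does.
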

\begin{proof}
    The proof relies on compactness results and the inequalities presented in section~\ref{sec:inequalities}. From these inequalities, we can extract subsequences of $(n_{\sg,\epsilon},\vp_{\sg,\epsilon})$ such that the following convergences for $\epsilon \to 0$ hold for all $T>0$. \par
\noindent \textit{ Step 1. Weak limits.}
    From \eqref{eq:vp-est} and  \eqref{eq:grad-vp-est}, we immediately have 
    \begin{equation}
     \vp_{\sg,\epsilon}  \rightharpoonup \vp_\sigma \text{ in } L^2 \big((0,T); H^1(\Omega ) \big).
    \label{eq:conv_svp-exist}
    \end{equation} 
    Next, from \eqref{eq:grad-est}, and the above convergence, we conclude
    \begin{equation}
    n_{\sg,\epsilon}  \rightharpoonup n_\sigma \text{ weakly in } L^2\big(0,T; H^1(\Omega) \big), 
    \label{eq:conv-weak-n-vp-exist}
    \end{equation}
    Finally from \eqref{eq:control-deriv-n} and \eqref{eq:control-deriv-n-vp}, we have
\begin{align*}
\p_t n_{\sg,\epsilon}  &\rightharpoonup \p_t n_\sigma  \text{ weakly in } L^2\big(0,T;(H^1(\Omega))^\prime \big),\\
\p_t \left(n_{\sg,\epsilon} -\f\sg\gamma \vp_{\sg,\epsilon} \right)&\rightharpoonup \p_t \left(n_\sigma -\f\sg\gamma \vp_\sg \right)\text{ weakly in } L^2\big(0,T;(H^1(\Omega))^\prime \big).
\end{align*}

\noindent{\em Step 2. Strong convergence.}
Therefore, from the Lions-Aubin lemma and Proposition~\ref{prop:control-time-deriv} we obtain the strong convergences
\begin{equation}
    n_{\sigma,\epsilon} \to n_\sigma \in L^2(0,T;L^2(\Omega)).
    \label{eq:conv-n-exist}
\end{equation} 
\begin{equation}
    n_{\sigma,\epsilon} - \f\sg\gamma \vp_{\sg,\epsilon} \to n_\sigma- \f\sg\gamma \vp_{\sg} \in L^2(0,T;H^1(\Omega)).
    \label{eq:conv-n-vp-exist}
\end{equation} 

\noindent \textit{Step 3. Bounds $0\leq n_\sigma  \le 1$.}
To prove these bounds on $n_\sg$, several authors have used the entropy relation. In the context of DCH equation with double-well potentials featuring singularities at $n=1$ and $n=-1$, the solution lies a.e.  in the interval $-1<n<1$. Elliott and Garcke~\cite{elliott_cahn-hilliard_1996} prove this result using the definition of the regularized entropy and by a contradiction argument. For single-well potential, Agosti~\textit{et~al.}~\cite{agosti_cahn-hilliard-type_2017} used a reasoning on the measure of the set of solutions outside the set $0\le n <1$ and find contradictions with the boundedness of the entropy. This is the route we follow here. In the following, all functions are defined almost everywhere.
\par
We begin by the upper bound . For $\alpha >0$, we consider the set 
\[
    V^\e_\alpha = \{(t,x) \in \Omega_T | n_{\sigma,\epsilon}(t,x) \ge 1+\alpha \}.
\]
For $A>0$, there exists a small $\epsilon_0$ such that the following estimate holds for every $\epsilon \le \epsilon_0$  
\[
    \phi_{\epsilon}^{\prime\prime}(n) = \frac{1}{b(1-\epsilon)}\geq 2 A \qquad \forall n \geq 1, \; \forall \epsilon > 0.    
\]
Thus, integrating this quantity twice, we obtain
\[
    \phi_{\epsilon}(n) \geq A (n-1)^2  \qquad \forall n \geq 1.  
\]
Also, from \eqref{eq:entropy}, we know that the entropy is uniformly bounded in $\e$. Therefore, we obtain
\[
| V^\e_\alpha | A \alpha^2 \leq \int_{\Omega_T}  \phi_{\epsilon}(n_{\sigma,\epsilon}(t,x)) \leq C(T), \qquad | V^\e_\alpha | \leq \frac{C(T)}{ A \alpha^2}.
\]
In the limit $\e \to 0$, using Fatou's lemma and the strong convergence of $n_{\sg,\epsilon}$, we conclude that
\[
\big| \{(t,x) \in \Omega_T | n_{\sigma}(t,x) \ge 1+\alpha \} \big| \leq \frac{C(T)}{ A \alpha^2}, \qquad \forall A>0.
\]
In other words $n_{\sigma}(t,x) \le 1+\alpha$ for all $\alpha>0$, which means $n_{\sigma}(t,x) \le 1$.

The same argument also gives $n_\sg \geq 0$ and we do not repeat it.\par

The second statement, $n_\sg <1$ under the assumption $\phi(1)=+\infty$, is a consequence of the bound
\[
\int_{\Omega_T}  \phi (n_{\sigma}(t,x)) \leq C(T),
\]
which holds true by strong convergence of $n_{\sigma,\epsilon}$ and because $\phi_\e \nearrow \phi$ as $\e \searrow 0$.\par
\noindent \textit{Step 4. Limiting equation.}
Finally, it remains to show that the limit of subsequences satisfies the RDCH equation in the weak form. 
Firstly, using the weak convergences ~\eqref{eq:conv_svp-exist}--\eqref{eq:conv-weak-n-vp-exist}, the strong convergence \eqref{eq:conv-n-vp-exist} and the properties of $\psi_-^\prime$ gathered from \eqref{eq:psi-minus}, we can pass to the limit in the standard way to obtain the second equation of the limit system.\par
\noindent To conclude the proof, we need to prove the following weak convergence, recalling that~\eqref{eq:flux-est} provides a uniform $L^2$ bound over $\Omega_T$, on $J_{\sigma,\epsilon}$
\begin{equation}
    J_{\sigma,\epsilon} := -B_\epsilon(n_{\sigma,\epsilon})\nabla(\vp_{\sigma,\epsilon} + \psi^\prime_{+,\epsilon}(n_{\sigma,\epsilon})) \rightharpoonup -b(n_\sigma) \nabla(\vp_\sigma + \psi_+^\prime(n_\sigma)) \text{ weakly in } L^2(\Omega_T). 
\end{equation}
The convergence of $B_\epsilon(n_{\sigma,\epsilon})\nabla\vp_{\sigma,\epsilon} $ follows from the weak convergence in $L^2(\Omega_T)$ of $\nabla\vp_{\sigma,\epsilon}$ and the strong convergence $B_\epsilon(n_{\sigma,\epsilon}) \rightarrow b(n_{\sigma})$ in all $L^p(\Omega_T)$, $1\leq p <\infty$  which follows from~\eqref{eq:conv-n-exist} and the fact that $B_\epsilon(.) \rightarrow b(.)$ uniformly. 

Because of the singularity  $\psi'_+(1)=\infty$, we use the assumption~\eqref{eq:assbpsi} 
and that $B_\epsilon(\cdot) \psi^{\prime\prime}_{+,\epsilon}(\cdot) \rightarrow b(\cdot)\psi^{\prime\prime}_{+}(\cdot)$ uniformly and thus
 $B_\epsilon(n_{\sg,\e}) \psi^{\prime\prime}_{+,\epsilon}(n_{\sg,\e}) \rightarrow b(n_{\sg,\e})\psi^{\prime\prime}_{+}(n_{\sg,\e})$ a.e. in $\Omega_T$ 
 This achieve the proof.
\end{proof}

\medskip

It is easy to check that the energy and entropy relations~\eqref{eq:energy_L}, \eqref{eq:entropy_L} hold, at least as inequalities. In the sequel we only use the a priori bounds coming from the limiting procedure.

\section{Convergence as $\sigma \to 0$}
\label{sec:cvg}

We are now ready to study the limit of the relaxed solution $n_\sg$ towards a solution of the DCH equation,
Our main result is as follows.
\begin{theorem}[Limit $\sg =0$]
Let $(n_{\sg,\epsilon}, \vp_{\sg,\epsilon})$ be a sequence of weak solutions of the RDHC system~\eqref{eq:reg_pb} with initial conditions $n^0$, $0\leq n^0 <1$,  with finite energy  and entropy. Then, as $\e, \; \sg \to 0$,  we can extract a subsequence of $(n_{\sg,\epsilon}, \vp_{\sg,\epsilon})$ such that
\begin{equation}
\vp_{\sg,\epsilon} \rightharpoonup - \gamma \Delta n + \psi'_-( n)  \quad \text{ weakly in } \; L^2(\Omega_T),
\label{eq:weak-phi-conv}
\end{equation}
\begin{equation}
    n_{\sg,\gamma}-\f\sg\gamma\vp_{\sg,\epsilon} \rightarrow n  \quad \text{ strongly in } \; L^2(0,T;H^1(\Omega)),
    \label{eq:strong-n-vp-conv}
\end{equation}
\begin{equation}
n_{\sg,\epsilon}, \;  \nabla n_{\sg,\epsilon} \rightarrow n, \; \nabla n \quad \text{ strongly in } \; L^2(\Omega_T), \text{ and }\; 0 \leq n  \leq1, 
\label{eq:strong-n-conv}
\end{equation}
and $n_\sg <1$ a.e. if $b$ vanishes fast enough at $1$ so that $\phi(1)=\infty$.
\begin{equation}
\p_t n_{\sg,\epsilon}  \rightharpoonup \p_t n  \text{ weakly in } L^2\big(0,T;(H^1(\Omega))^\prime \big).
\label{cv:time derivative}
\end{equation}
This limit $n$  satisfies the DCH system \eqref{eq:CH} in the  weak sense.
\label{th:sgto0}
\end{theorem}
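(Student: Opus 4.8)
The plan is to pass to the limit $\sg,\e \to 0$ in the weak formulation of the regularized-relaxed system, relying on the uniform a priori bounds established in Section~\ref{sec:inequalities}, which are the crucial ingredient since they do not degenerate as $\sg \to 0$. First I would extract the weak and strong limits. The key observation is that the energy bound \eqref{eq:grad-est} controls $\nabla(n_{\sg,\e}-\f\sg\gamma\vp_{\sg,\e})$ in $L^2$, the entropy-energy bound \eqref{eq:lap-est} controls $\Delta(n_{\sg,\e}-\f\sg\gamma\vp_{\sg,\e})$ in $L^2(\Omega_T)$, and the time-derivative bound \eqref{eq:control-deriv-n-vp} controls $\p_t(n_{\sg,\e}-\f\sg\gamma\vp_{\sg,\e})$. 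Together with Lions--Aubin these give \eqref{eq:strong-n-vp-conv}. The decisive point is that \eqref{eq:vp-est} gives $\f{\sg}{2\gamma}\int_\Omega|\vp_{\sg,\e}|^2 \le \cae_{\sg,\e}[n^0]$ uniformly, so $\f\sg\gamma\vp_{\sg,\e}\to 0$ strongly in $L^\infty(0,T;L^2(\Omega))$ (as $\sqrt{\sg}$ times an $L^2$-bounded quantity). Hence $n_{\sg,\e}$ has the same strong $L^2$ limit $n$ as $n_{\sg,\e}-\f\sg\gamma\vp_{\sg,\e}$, and by comparing the $H^1$ strong convergence of the difference with the weak $L^2(H^1)$ bound on $n_{\sg,\e}$ one upgrades to the strong gradient convergence \eqref{eq:strong-n-conv}. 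The bounds $0\le n\le1$ (and $n<1$ under $\phi(1)=\infty$) transfer from Theorem~\ref{th:existence} by the same Fatou argument applied to the uniformly bounded entropy.

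Next I would identify the limit of $\vp_{\sg,\e}$. Rewriting the second equation as $\vp_{\sg,\e} = -\gamma\Delta(n_{\sg,\e}-\f\sg\gamma\vp_{\sg,\e}) + \psi'_-(n_{\sg,\e}-\f\sg\gamma\vp_{\sg,\e})$, I would pass to the limit term by term: the Laplacian term converges weakly in $L^2(\Omega_T)$ to $-\gamma\Delta n$ because $n_{\sg,\e}-\f\sg\gamma\vp_{\sg,\e}\to n$ in $L^2(0,T;H^2)$ weakly (from \eqref{eq:lap-est} plus the strong $H^1$ limit), while the Lipschitz, bounded function $\psi'_-$ applied to the strongly-$L^2$-convergent argument gives $\psi'_-(n)$ a.e. and in $L^2$ by dominated convergence. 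This yields \eqref{eq:weak-phi-conv}. A subtle point is that although $\vp_{\sg,\e}$ itself is only bounded like $\sg^{-1/2}$ in $L^2$, the right-hand side of the rewritten equation is bounded uniformly in $L^2(\Omega_T)$, so $\vp_{\sg,\e}$ is in fact $L^2(\Omega_T)$-bounded and its weak limit is the claimed expression.

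The final and most delicate step is passing to the limit in the first (conservation) equation to recover the DCH equation in weak form. The weak formulation reads $\int_0^T\langle\chi,\p_t n_{\sg,\e}\rangle = \int_{\Omega_T} B_\e(n_{\sg,\e})\nabla(\vp_{\sg,\e}+\psi'_{+,\e}(n_{\sg,\e}))\cdot\nabla\chi$, and the flux $J_{\sg,\e}$ is uniformly $L^2(\Omega_T)$-bounded by \eqref{eq:flux-est}. I would split the flux as $B_\e(n_{\sg,\e})\nabla\vp_{\sg,\e} + B_\e(n_{\sg,\e})\psi''_{+,\e}(n_{\sg,\e})\nabla n_{\sg,\e}$. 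For the first piece I must show it converges to $b(n)\nabla\vp$ where $\vp = -\gamma\Delta n+\psi'_-(n)$; the difficulty is that $\nabla\vp_{\sg,\e}$ converges only weakly (indeed $\sqrt{\sg}\nabla\vp_{\sg,\e}$ is what \eqref{eq:grad-vp-est} bounds), while $B_\e(n_{\sg,\e})\to b(n)$ only strongly in $L^p$, so the product requires care — I expect to express it back through the flux and the equation rather than multiplying the two factors directly. For the second piece, the essential technical device, exactly as in Theorem~\ref{th:existence}, is assumption \eqref{eq:assbpsi}, namely that $b\,\psi''_+$ extends continuously to $[0,1]$ and $B_\e\psi''_{+,\e}\to b\psi''_+$ uniformly, taming the singularity of $\psi'_+$ at $n=1$. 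The main obstacle is thus the passage to the limit in this degenerate, singular flux: one cannot identify $\lim B_\e(n_{\sg,\e})\nabla\psi'_{+,\e}(n_{\sg,\e})$ naively, and I would instead pass to the limit on the product $B_\e\psi''_{+,\e}(n_{\sg,\e})\nabla n_{\sg,\e}$ using the uniform convergence of $B_\e\psi''_{+,\e}$ together with the strong convergence $\nabla n_{\sg,\e}\to\nabla n$ in $L^2(\Omega_T)$ from \eqref{eq:strong-n-conv}. Combining these identifications with the weak convergence \eqref{cv:time derivative} of $\p_t n_{\sg,\e}$ and using test functions $\chi\in L^2(0,T;H^1)\cap L^\infty$ with the right boundary behaviour, I obtain the DCH equation in the weak sense, completing the proof.
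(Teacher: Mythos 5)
Your compactness analysis (Steps 1--2) matches the paper's: the uniform bounds \eqref{eq:grad-est}, \eqref{eq:lap-est}, \eqref{eq:control-deriv-n-vp} plus Lions--Aubin give the strong convergence of $n_{\sg,\epsilon}-\f\sg\gamma\vp_{\sg,\epsilon}$, the bound \eqref{eq:vp-est}--\eqref{eq:grad-vp-est} kills $\f\sg\gamma\vp_{\sg,\epsilon}$ and $\f\sg\gamma\nabla\vp_{\sg,\epsilon}$ in $L^2$, and the identification of the weak limit of $\vp_{\sg,\epsilon}$ through the elliptic equation is exactly the paper's argument. The gap is in your Step 3, which you yourself flag as the delicate point but then leave unresolved: ``I expect to express it back through the flux and the equation'' is the right instinct, but it is precisely the step that carries the proof, and as stated your target does not even make sense --- you propose to show $B_\epsilon(n_{\sg,\epsilon})\nabla\vp_{\sg,\epsilon}\rightharpoonup b(n)\nabla\vp$, yet the limit $\vp=-\gamma\Delta n+\psi_-'(n)$ is only an $L^2(\Omega_T)$ function, so $\nabla\vp$ is not an $L^2$ object and the product $b(n)\nabla\vp$ has no pointwise meaning. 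No amount of weak/strong pairing on the splitting $B_\epsilon\nabla\vp_{\sg,\epsilon}+B_\epsilon\psi''_{+,\epsilon}\nabla n_{\sg,\epsilon}$ can close this, because $\nabla\vp_{\sg,\epsilon}$ is not uniformly bounded in $L^2$ (only $\sqrt\sg\,\nabla\vp_{\sg,\epsilon}$ is).

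The missing maneuver is the following. Test the flux against a vector field $\eta\in L^2(0,T;H^1(\Omega,\R^d))\cap L^\infty$ with $\eta\cdot\nu=0$, substitute $\vp_{\sg,\epsilon}=-\gamma\Delta\bigl(n_{\sg,\epsilon}-\f\sg\gamma\vp_{\sg,\epsilon}\bigr)+\psi_-'\bigl(n_{\sg,\epsilon}-\f\sg\gamma\vp_{\sg,\epsilon}\bigr)$ \emph{inside} the flux, and integrate by parts once to move the extra derivative off the third-order term onto $B_\epsilon(n_{\sg,\epsilon})\eta$:
\[
\int_{\Omega_T} J_{\sg,\epsilon}\cdot\eta
= -\int_{\Omega_T}\gamma\,\Delta\Bigl(n_{\sg,\epsilon}-\f\sg\gamma\vp_{\sg,\epsilon}\Bigr)\,\nabla\cdot\bigl(B_\epsilon(n_{\sg,\epsilon})\eta\bigr)
+ B_\epsilon(n_{\sg,\epsilon})\nabla\Bigl(\psi'_{+,\epsilon}(n_{\sg,\epsilon})+\psi_-'\bigl(n_{\sg,\epsilon}-\f\sg\gamma\vp_{\sg,\epsilon}\bigr)\Bigr)\cdot\eta .
\]
Now every factor is controlled: $\Delta(n_{\sg,\epsilon}-\f\sg\gamma\vp_{\sg,\epsilon})$ converges weakly in $L^2(\Omega_T)$ by \eqref{eq:lap-est}, while $B_\epsilon(n_{\sg,\epsilon})\to b(n)$, $B'_\epsilon(n_{\sg,\epsilon})\to b'(n)$, $\nabla n_{\sg,\epsilon}\to\nabla n$ and $B_\epsilon\psi''_{+,\epsilon}(n_{\sg,\epsilon})\to b\psi''_+(n)$ all converge strongly (the last by \eqref{eq:assbpsi}, as you correctly anticipated). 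This is why the limit equation must be stated in the weak form \eqref{eq:weak-system}, with the terms $b(n)\Delta\chi$ and $b'(n)\nabla n\cdot\nabla\chi$ appearing and with test functions in $L^2(0,T;H^2(\Omega))$: the object ``$b(n)\nabla\vp$'' only exists after this redistribution of derivatives. Without writing this identity down, the limit of the flux is not identified and the proof is incomplete.
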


We recall the definition of weak solutions; for all $\chi \in L^2(0,T;H^2(\Omega))\cap L^\infty(\Omega_T)$ with  $\nabla \chi  \cdot \nu =0$ on $\p \Omega \times (0,T)$, 
\begin{equation}
\begin{cases}
\int_0^T <\chi, \p_t n> &= \int_{\Omega_T} J \cdot \nabla \chi, \\
\int_{\Omega_T} J \cdot \nabla \chi &= - \int_{\Omega_T} \gamma   \Delta n  \left[ b'(n)\nabla n \cdot\nabla \chi + b(n) \Delta \chi \right] + (b\psi^{\prime \prime})(n) \nabla n \cdot \nabla \chi.
\end{cases}
\label{eq:weak-system}
\end{equation}

\begin{proof} We gathered, from the energy and entropy estimates of section~\ref{sec:estimates}, the a priori bounds of the section~\ref{sec:inequalities}. \par
\vspace{5pt}
\textit{Step 1. Weak limits.}
From the above mentioned inequalities, we can extract subsequences of $(n_{\sg,\epsilon},\vp_{\sg,\epsilon})$ such that the following convergences hold for all $T>0$. From \eqref{eq:vp-est} and  \eqref{eq:grad-vp-est}, we immediately have 
\begin{equation}
\sg \vp_{\sg,\epsilon} \to 0  \text{ in } L^2 \big(0,T; H^1(\Omega ) \big).
\label{eq:conv_svp}
\end{equation} 
Next, from \eqref{eq:grad-est}, and the above convergence, we conclude
\[
n_{\sg,\epsilon}  \rightharpoonup n \text{ weakly in } L^2\big(0,T; H^1(\Omega) \big), 
\]
and \eqref{eq:lap-est} gives directly 
\begin{equation}
\Delta (n_{\sg,\epsilon}-\f{\sg}{\gamma}\vp_{\sg,\epsilon}  )\rightharpoonup \Delta n  \text{ weakly in } L^2(\Omega_T).
\label{eq:conv_lapn}
\end{equation}
This latter convergence is obtained in the distribution sense using integration per parts, for all test function $\chi \in \mathcal{D}(\Omega_T)$
\[
    \int_{\Omega_T}  \Delta \big(n_{\sg,\epsilon}-\f{\sg}{\gamma}\vp_{\sg,\epsilon}  \big) \chi = - \int_{\Omega_T}  \nabla \big(n_{\sg,\epsilon}-\f{\sg}{\gamma}\vp_{\sg,\epsilon}  \big) \nabla \chi.
\] 
Then using \eqref{eq:conv_svp}, we obtain \eqref{eq:conv_lapn}.
The system of equations can also be used to complement these results. We find
\[
\vp_{\sg,\epsilon} \rightharpoonup \vp \text{ weakly in } L^2(\Omega_T),
\]
 using the second equation of the system \eqref{eq:reg_pb} and triangular inequality,
\[
\|\vp_{\sg,\epsilon} \|_{L^2(\Omega_T)} \le \gamma \|\Delta(n_{\sg,\epsilon}-\f{\sg}{\gamma} \vp_{\sg,\epsilon})\|_{L^2(\Omega_T)} + \| \psi_-^\prime(n_{\sg,\epsilon}-\f{\sg}{\gamma} \vp_{\sg,\epsilon}) \|_{L^2(\Omega_T)} .
\]
Finally from \eqref{eq:flux-est} and the equation on $n_{\sg,\epsilon}$ itself, we conclude~\eqref{cv:time derivative}.
\\[5pt]
\textit{Step 2. Strong convergence.} We continue with proving the strong convergences in~\eqref{eq:strong-n-conv}. From the inequality~\eqref{eq:lap-est}, we know that $\Delta \left( n_{\sg,\epsilon}-\f{\sg}{\gamma}\vp_{\sg,\epsilon}\right)$ is uniformly bounded in $L^2(\Omega_T)$. We also have the boundary conditions, $\nabla\left( n_{\sg,\epsilon}-\f{\sg}{\gamma}\vp \right)\cdot \nu =0$ and the conservation of both quantities. Therefore elliptic regularity theory gives us
\[
\| n_{\sg,\epsilon}-\f{\sg}{\gamma}\vp_{\sg,\epsilon} \|_{L^2(0,T;H^2(\Omega))} \le C.
\]
Therefore strong compactness  in space  holds for  the quantities $ n_{\sg,\epsilon}-\f{\sg}{\gamma} \vp$ and $\nabla[ n_{\sg,\epsilon}-\f{\sg}{\gamma}\vp]$.  Furthermore, from the limit~\eqref{eq:conv_svp}, it means  that both $ n_{\sg,\epsilon}$ and  $\nabla n_{\sg,\epsilon}$ are compact in space. 
\AP{Compactness in time is also obtain for the quantity $n_{\sg,\epsilon}$ from \eqref{eq:control-deriv-n}. Again from Lions-Aubin lemma, we have the strong convergence \eqref{eq:strong-n-conv}. 
From this latter result and the strong convergence~\eqref{eq:conv_svp}, we obtain~\eqref{eq:strong-n-vp-conv}.}
The conclusion~\eqref{eq:weak-phi-conv} follows from this results. 

The bounds $0\leq n <1$ can be obtained as in the case $\e \to 0$ , see Theorem~\ref{th:existence} and we do not repeat the argument.
\\[5pt]
\textit{Step 3. Limiting equation.} Next, we need to verify that the limit of the subsequence $n_{\sg,\epsilon}$ satisfies the DCH equation.  The argument is different from the case $\e \to 0$ because we do not control $\nabla \vp_{\sg,\epsilon}$ in the case at hand. From the $L^2$ bound in \eqref{eq:flux-est}, we need to identify the weak limit 
\begin{equation}
J_{\sg,\epsilon}:= -B_{\epsilon}(n_{\sg,\epsilon})\nabla(\vp_{\sg,\epsilon} + \psi^\prime_{+,\epsilon}(n_{\sg,\epsilon})) \rightharpoonup -b(n)\nabla(\vp + \psi^\prime_{+}(n)) \quad \text{weakly in } L^2(\Omega_T). 
\label{eq:conv_J}\end{equation} 

For a test function  $\eta \in L^2(0,T;H^1(\Omega,\mathbb{R}^d))\cap L^\infty(\Omega_T,\mathbb{R}^d)$ and $\eta \cdot \mu = 0$ on $\partial \Omega \times (0,T)$, we  integrate the left-hand side to obtain
\[
\int_{\Omega_T} J_{\sg,\epsilon} \cdot \eta = -\int_{\Omega_T} \gamma \Delta \left(n_{\sg,\epsilon} - \f{\sg }{\gamma} \vp_{\sg,\epsilon}\right) \nabla \cdot (B_\epsilon(n_{\sg,\epsilon})\eta) + B_\epsilon(n_{\sg,\epsilon})\nabla \left(\psi^\prime_{+,\epsilon}(n_{\sg,\epsilon}) + \psi^\prime_-(n_{\sg,\epsilon}-\f{\sg}{\gamma}\vp_{\sg,\epsilon})\right) \cdot \eta. 
\]
We have mainly two types of terms on the right-hand side ~{$\int_{\Omega_T} \gamma \Delta \left(n_{\sg,\epsilon} - \f{\sg }{\gamma} \vp_{\sg,\epsilon}\right) \nabla \cdot (B_\epsilon(n_{\sg,\epsilon})\eta)$} and $\int_{\Omega_T} B_\epsilon(n_{\sg,\epsilon})\nabla \left(\psi^\prime_{+,\epsilon}(n_{\sg,\epsilon}) + \psi^\prime_-(n_{\sg,\epsilon}-\f{\sg}{\gamma}\vp_{\sg,\epsilon})\right) \cdot \eta $. 
Let us focus on the first term 
\[
\begin{aligned}
\int_{\Omega_T} \gamma \Delta \left(n_{\sg,\epsilon} - \f{\sg }{\gamma} \vp_{\sg,\epsilon}\right)\nabla \cdot (B_\epsilon(n_{\sg,\epsilon})\eta) =&  \int_{\Omega_T} \gamma \Delta \left(n_{\sg,\epsilon} - \f{\sg }{\gamma} \vp_{\sg,\epsilon}\right) B_\epsilon(n_{\sg,\epsilon}) \nabla \cdot \eta 
\\
&+ \int_{\Omega_T} \gamma \Delta \left(n_{\sg,\epsilon} - \f{\sg }{\gamma} \vp_{\sg,\epsilon}\right)   B^\prime_\epsilon(n_{\sg,\epsilon}) \nabla n_{\sg,\epsilon} \cdot\eta. 
\end{aligned}
\]
 From the strong convergence \eqref{eq:strong-n-conv} and the weak one \eqref{eq:conv_lapn} with the fact that $B_\epsilon(\cdot) \rightarrow b(\cdot)$ uniformly, we obtain the convergence of the first term of the right-hand side
\[
\int_{\Omega_T} \gamma \Delta \left(n_{\sg,\epsilon} - \f{\sg }{\gamma} \vp_{\sg,\epsilon}\right) B_\epsilon(n_{\sg,\epsilon}) \nabla \cdot \eta \rightarrow \int_{\Omega_T} \gamma \Delta n \; b(n) \nabla \cdot \eta,
\]
as $\sg,\epsilon \rightarrow 0$ and thus we have passed to the limit in the first term of the right hand side. For the second term, we
use that the derivative $B^\prime_\epsilon(\cdot) \to  b^\prime(\cdot)$ uniformly. We also use the strong convergence of $\nabla n_{\sg,\epsilon}$ from \eqref{eq:strong-n-conv}.
From the results above and a generalized version of the Lebesgue dominated convergence theorem we obtain
\[
\int_{\Omega_T} \gamma \Delta \left(n_{\sg,\epsilon} - \f{\sg }{\gamma} \vp_{\sg,\epsilon}\right)   B^\prime_\epsilon(n_{\sg,\epsilon}) \nabla n_{\sg,\epsilon} \cdot\eta   \rightarrow \int_{\Omega_T} \gamma \Delta n   b^\prime(n) \nabla n \cdot\eta , 
\]
as $\sg,\epsilon \rightarrow 0$. 

Let us now pass to the limit in $\int_{\Omega_T} B_\epsilon(n_{\sg,\epsilon})\nabla \left(\psi^\prime_{+,\epsilon}(n_{\sg,\epsilon}) + \psi^\prime_-(n_{\sg,\epsilon}-\f{\sg}{\gamma}\vp_{\sg,\epsilon})\right) \cdot \eta$. As in the case of the convergence $\epsilon \to 0$, we have that 
\[
    \int_{\Omega_T} B_\epsilon(n_{\sg,\epsilon})\nabla \left(\psi^\prime_{+,\epsilon}(n_{\sg,\epsilon}) \right)\cdot \eta,
\]
using the fact that ${B_\epsilon(\cdot) \psi^{\prime\prime}_{+,\epsilon}(\cdot) \rightarrow b(\cdot)\psi^{\prime\prime}_{+}(\cdot)}$ uniformly and the strong convergence \eqref{eq:strong-n-conv}. 
Since ${B_\epsilon(\cdot) \rightarrow b(\cdot)}$, we have ${\left(B_\epsilon\psi^{\prime\prime}_-\right)(\cdot) \rightarrow \left(b\psi^{\prime\prime}_-\right)(\cdot)}$.

\noindent Therefore, we pass to pass to the limit in ${\int_{\Omega_T} B_\epsilon(n_{\sg,\epsilon})\nabla \left(\psi^\prime_{-}(n_{\sg,\epsilon}-\frac{\sg}{\gamma}\vp_{\sg,\epsilon}) \right)\cdot \eta}$ using the convergence \eqref{eq:strong-n-vp-conv}. Altogether, we obtain the following convergence
\[
    \int_{\Omega_T} B_\epsilon(n_{\sg,\epsilon})\nabla \left(\psi^\prime_{+,\epsilon}(n_{\sg,\epsilon}) + \psi^\prime_-(n_{\sg,\epsilon}-\f{\sg}{\gamma}\vp_{\sg,\epsilon})\right) \cdot \eta \rightarrow \int_{\Omega_T} b(n)\nabla \left(\psi^\prime_{+}(n) + \psi^\prime_-(n)\right) \cdot \eta
\]
\par
This finishes the proof of \eqref{eq:conv_J}, i.e. that the limit solution $n$ satisfies the weak formulation of the DCH equation~\eqref{eq:CH}, and also the proof of Theorem~\ref{th:sgto0}.
\end{proof}
\section{Long-time behavior}
\label{sec:ltb}

To complete our study  of the RDCH model, we give some insights concerning the long-time behavior and convergence to steady states, $(n_\infty,\vp_\infty)$ determined by the steady problem
\begin{equation}
\begin{cases}
&
\nabla \cdot \left(b(n_\infty) \nabla \left(\vp_\infty + \psi_+^\prime(n_\infty) \right) \right) = 0 \quad \text{ in }\;  \Omega ,
\\[5pt]
&-\sg \Delta \vp_\infty + \vp_\infty = -\gamma \Delta n_\infty + \psi^\prime_-(n_\infty - \f{\sg}{\gamma} \vp_\infty )\text{ in } \Omega ,
\\[5pt]
&\f{ \p \left( n_\infty- \frac{\sigma}{\gamma}\vp_\infty\right)  }{\p \nu}  = b(n_\infty)\f{\p \left( \vp_\infty + \psi_+^\prime(n_\infty) \right) }{\p \nu} = 0 \quad \text{ on } \; \p \Omega.
\end{cases}
\label{eq:ststRDCH}
\end{equation}  

The analysis of the steady-states is not performed in this paper, however, numerical simulations can help us to have an idea of their shape for different initial situations. 

The steady-states of the RDCH model present a configuration which minimizes the energy of the system. The solution obtained at the end of the simulation depends mainly on three parameters: the initial mass $M$, the width of the diffuse interface $\sqrt{\gamma}$ and the relaxation parameter $\sigma$. \par  
In fact, if the initial mass is large enough, saturated aggregates are formed and we can describe two regions in the domain: the aggregates and the absence of cells. Between these two regions, the transition is smooth and the length of this interface is $\sqrt{\gamma}$. 
If the initial mass is small, aggregates are still formed but they are thicker and their maximum concentration does not reach $1$ or the critical {value~$n^\star$} as in the definition of the potential \eqref{eq:pot1}. \par
The formation of aggregates happens only if $\gamma$ is small enough. If $\gamma$, the initial mass $M$ or the relaxation parameter $\sigma$ is too large, the solution converges to the constant one 
\[
   n_\infty = \frac{1}{|\Omega|}\int_\Omega n^0 dx, \quad \text{a.e. in } \Omega.
\]
A surprizing fact about these observations is that the long-time behavior of the solutions of the RDCH system seems to follow the analytical description of the steady-states made by Songmu \cite{songmu_asymptotic_1986}.    


To  state our convergence result of the weak solutions of the RDCH model to steady-states, we consider a global weak solution  $(n,\vp)$ of the RDCH system with $\sg >0$, according to Theorem~\ref{th:existence}. The initial condition satisfies $0 \le n^0 < 1$ and has finite energy and entropy. so that we can use the a priori estimates from the transport structure, the energy and entropy dissipations  \eqref{eq:deriv-energy} and  \eqref{eq:entropy} (or \eqref {eq:energy_L}--\eqref{eq:entropy_L}), in particular
\begin{equation}
 0 \le n < 1 \text{ a.e. } (0, \infty)\times \Omega.
\label{eq:n-0-1}
\end{equation}
Based on the controls provided by these relations, and using a standard method, we are going to study the large time behavior as the limit for large $k$ of the sequence of functions 
\[
n_k(t,x) = n(t+k,x), \qquad \text{and}  \qquad \vp_k(t,x) = \vp(t+k,x).
\]
\begin{proposition} [Long term convergence along subsequences]
Let $(n,\vp)$ be a weak solution of \eqref{eq:CH-relax}, \eqref{eq:CH-bound-relax} and initial condition $n^0$ with $0\le n^0 < 1$,  finite energy  and entropy. 
Then,  we can extract a subsequence, still denoted by index $k$,  of $(n_k, \vp_k)$ such that 
\begin{equation}
\lim_{k\to \infty} n_k(x,t) = n_{\infty}(x) , \quad \lim_{k\to \infty} \vp_k(x,t) = \vp_{\infty}(x)  \qquad \text{ strongly in } L^2\big((-T,T)\times \Omega\big), \quad  \forall T>0,
\label{eq:conv-nk}
\end{equation}
where $(n_{\infty},  \vp_{\infty})$ are solutions of \eqref{eq:ststRDCH} satisfying
\begin{equation}
 b(n_\infty) \nabla \left(\vp_\infty + \psi^\prime_+(n_\infty)\right) =0.
 \label{zeroflusststs}
 \end{equation}
\end{proposition}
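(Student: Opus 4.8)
The plan is to exploit the monotonicity of the energy to produce, along far-away time windows, a vanishing dissipation, and then to turn the uniform a priori bounds of Section~\ref{sec:estimates} into strong compactness for the shifted pair $(n_k,\vp_k)$, whose limits will be time-independent and stationary. First I would integrate the energy dissipation~\eqref{eq:deriv-energy_L}: since $t\mapsto\cae_\sg[n(t)]$ is non-increasing and bounded from below, it converges as $t\to\infty$, so the total dissipation is finite,
\[
\int_0^\infty\!\int_\Omega b(n)\,\big|\nabla(\vp+\psi'_+(n))\big|^2\,\dd x\,\dd t=\cae_\sg[n^0]-\lim_{t\to\infty}\cae_\sg[n(t)]<\infty .
\]
Translating in time, this forces $\int_{-T}^{T}\!\int_\Omega b(n_k)\,|\nabla(\vp_k+\psi'_+(n_k))|^2\to0$ for every fixed $T>0$, i.e. the flux $J_\sg(n_k,\vp_k)\to0$ strongly in $L^2\big((-T,T)\times\Omega\big)$. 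Reading the first equation weakly then gives $\p_t n_k=-\nabla\cdot J_\sg(n_k,\vp_k)\to0$ in $L^2\big(-T,T;(H^1(\Omega))'\big)$, the quantitative form of stationarity.

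Next I would collect $k$-independent bounds on the window $(-T,T)$. The threshold $0\le n_k\le1$ with the energy bound~\eqref{eq:grad-est} controls $n_k$ in $L^\infty\big(-T,T;H^1(\Omega)\big)$, and~\eqref{eq:vp-est} controls $\vp_k$ in $L^\infty\big(-T,T;L^2(\Omega)\big)$. The decisive point is that $w_k:=n_k-\f\sg\gamma\vp_k$ solves the uniformly elliptic, strictly monotone problem
\[
-\sg\Delta w_k+w_k+\f\sg\gamma\psi'_-(w_k)=n_k ,
\]
obtained by eliminating $\vp_k$ from the second equation of~\eqref{eq:CH-relax}. Since $1+\f\sg\gamma\psi''_-\ge\alpha>0$ by~\eqref{eq:psi-minus} and $\psi'_-$ is bounded, testing with $w_k$ and using $0\le n_k\le1$ bounds $w_k$ in $L^\infty\big(-T,T;H^2(\Omega)\big)$ independently of $k$, and $\p_t w_k$ in $L^2\big(-T,T;(H^1(\Omega))'\big)$ through Proposition~\ref{prop:control-time-deriv}. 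I would use this elliptic identity in place of the entropy estimate, whose dissipation bound degrades over long windows.

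Then the Lions--Aubin lemma applied to $n_k$ (with $\p_t n_k\to0$) gives $n_k\to n_\infty$ strongly in $L^2\big((-T,T)\times\Omega\big)$; subtracting two instances of the elliptic problem shows that $n\mapsto w$ is Lipschitz from $L^2$ into $H^2$, so $w_k\to w_\infty$ strongly in $H^2$ and $\Delta w_k\to\Delta w_\infty$ in $L^2$, whence $\vp_k=\f\gamma\sg(n_k-w_k)\to\vp_\infty$ strongly in $L^2$, which is~\eqref{eq:conv-nk}. Because $\p_t n_k\to0$, the limit $n_\infty$ and therefore $w_\infty,\vp_\infty$ (determined from $n_\infty$ through the time-independent elliptic equation) do not depend on $t$. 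Passing to the limit in the second equation is immediate; passing to the limit in the flux follows the integration-by-parts argument of Step~3 of Theorem~\ref{th:sgto0}, using only the weak limit of $\Delta w_k$, the strong limit of $n_k$, and the cancellation $b\psi''_+\in C([0,1])$ from~\eqref{eq:assbpsi}. Since $J_\sg(n_k,\vp_k)\to0$, the identified limit must vanish, giving $b(n_\infty)\nabla(\vp_\infty+\psi'_+(n_\infty))=0$, i.e.~\eqref{zeroflusststs}, and a fortiori the first steady equation of~\eqref{eq:ststRDCH}.

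I expect the main obstacle to be twofold. The first difficulty is obtaining $k$-independent bounds: the entropy functional is not monotone and its dissipation inequality controls $\int|\nabla\vp|^2$ and $\int|\Delta w|^2$ only by a constant that grows with the window length, so it cannot be used naively; my remedy is to read the $H^2$-control of $w_k$ directly off the elliptic equation, which needs only $0\le n_k\le1$ and the boundedness of $\psi'_-$. The second, more delicate, point is the passage to the limit in the degenerate and singular flux, where a product of the weakly converging $\nabla n_k$ with $\Delta w_k$ must be handled; this is exactly where the strong $H^2$-convergence of $w_k$ (hence the strong convergence of $\Delta w_k$ and of $b'(n_k)\Delta w_k$) allows one to require only weak convergence of $\nabla n_k$, and where the structural assumption $b\psi''_+\in C([0,1])$ tames the singularity of $\psi'_+$ at $n=1$.
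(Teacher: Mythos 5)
Your overall architecture --- finite total dissipation forces $L_k(T):=\int_{-T}^{T}\int_\Omega b(n_k)\,|\nabla(\vp_k+\psi'_+(n_k))|^2\to 0$, hence $\p_t n_k\to 0$ in $L^2(-T,T;(H^1(\Omega))')$, then compactness and identification of a zero-flux steady state --- is the same as the paper's, and your elliptic reformulation $-\sg\Delta w_k+w_k+\f{\sg}{\gamma}\psi'_-(w_k)=n_k$ for $w_k=n_k-\f{\sg}{\gamma}\vp_k$ is algebraically correct; together with $\f{\sg}{\gamma}\|\psi''_-\|_\infty<1$ from \eqref{eq:psi-minus} it does give a $k$-uniform $L^\infty(-T,T;H^2(\Omega))$ bound on $w_k$ and the Lipschitz property of $n\mapsto w$ from $L^2$ into $H^2$. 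The genuine gap is in your compactness step for $n_k$ itself. The energy bound \eqref{eq:grad-est-time} controls $\nabla\big(n_k-\f{\sg}{\gamma}\vp_k\big)=\nabla w_k$, \emph{not} $\nabla n_k$; combined with $0\le n_k\le 1$ this only places $n_k$ in $L^\infty(-T,T;L^2(\Omega))$, so the Lions--Aubin lemma cannot be applied to $n_k$ as you claim. Your elliptic identity cannot repair this: it says $n_k=w_k-\sg\Delta w_k+\f{\sg}{\gamma}\psi'_-(w_k)$, so strong $L^2$ compactness of $n_k$ is equivalent to strong $L^2$ compactness of $\Delta w_k$, i.e.\ to compactness of $w_k$ in $H^2$ --- and your uniform bound lives exactly in $H^2$, one derivative short of compactness there. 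The Lipschitz map $n\mapsto w$ into $H^2$ runs in the wrong direction: it converts strong convergence of $n_k$ into strong convergence of $\Delta w_k$, which is the conclusion you are after, not a hypothesis you hold. The same missing bound undermines the final flux identification, since both the ``weak convergence of $\nabla n_k$'' and the ``strong convergence of $\Delta w_k$'' invoked there presuppose it.

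This is precisely the role of the entropy estimate you chose to discard. The paper integrates \eqref{eq:entropy_L} over the window $(k-T,k+T)$ and absorbs the $\psi''_-$ term using the energy, so as to control $\int_{-T}^{T}\int_\Omega\big[\f{\sg}{\gamma}|\nabla\vp_k|^2+\psi''_+(n_k)|\nabla n_k|^2\big]$ for fixed $T$; it is this bound that furnishes $\nabla n_k\in L^2((-T,T)\times\Omega)$ (hence spatial compactness of $n_k$ and the Lions--Aubin argument) and the weak $H^1$ convergence of $\vp_k$. Your stated reason for avoiding it --- that the dissipation bound ``degrades over long windows'' --- is not the obstruction here, since $T$ is fixed and the factor $T/\gamma$ is harmless. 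The entropy step must therefore be retained; your elliptic identity is a useful complement (it gives a uniform-in-time $H^2$ control of $w_k$ and a clean route from strong convergence of $n_k$ to strong convergence of $\vp_k$), but it is not a substitute for the entropy dissipation.
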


\begin{proof} The proof uses the energy and entropy inequalities to obtain both uniform (in $k$) a priori bounds and zero entropy dissipation in the limit, which imply the result. We write these arguments in several steps.
\\
\noindent \textit{1st step.  A priori bounds from energy.} Energy decay implies that $\cae [n_k(t)]$ remains bounded in $k$ for $t> -k$. As a consequence,   the sequence $(n_k,\vp_k)$ satisfies
\begin{equation}
   \f{\sg}{2\gamma}  \int_\Omega \left|\vp_k(t)\right|^2 \leq \cae [n^0], \qquad \forall  t \geq 0,
   \label{eq:vp-est-time}
\end{equation}
\begin{equation}
   \f{\gamma}{2} \int_\Omega \left|\nabla \big(n_k(t)-\f{\sigma}{\gamma}\vp_k(t)\big) \right|^2 \leq \cae [n^0], \qquad \forall  t \geq 0,
   \label{eq:grad-est-time}
\end{equation}
\begin{equation}
\int_{-T}^T  \int_\Omega b(n_k) \big|\nabla (\vp_k + \psi'_{+}(n_k))\big|^2 := L_k(T) ,  \qquad  L_k(T) \to 0 \; \text{ as }\;  k \to \infty,
\label{eq:flux-est-time}
\end{equation}
and this last line is because 
\[
\int_{0}^\infty  \int_\Omega b(n) \big|\nabla (\vp+ \psi'_{+}(n))\big|^2 \le \cae [n^0], \quad  L_k(T) \leq \int_{k-T}^\infty  \int_\Omega b(n) \big|\nabla (\vp+ \psi'_{+}(n))\big|^2 ]  \underset{ k \to \infty }{\longrightarrow}  0 .
\]

\noindent  \textit{2nd step.  A priori bounds from entropy.} Because the right hand side in the entropy balance has a positive term (since $\psi''_-(n)\le 0, \forall n\in [0, 1]$), it cannot be used as easily as the energy. However, we can integrate~\eqref{eq:entropy_L} from $k-T$ to $k+T$, and, using the control of the negative term including $\psi_-$  as after~\eqref{eq:entropy_L},   we obtain the inequality 
\[
   \begin{aligned}
      \int_{-T}^T &\left[ \left| \Delta\left( n_k - \f{\sg}{\gamma} \vp_k \right)\right|^2 + \f{\sg}{\gamma}|\nabla \vp_k|^2  + \psi^{\prime\prime}_{+}(n_k)|\nabla n_k|^2\right] 
    \\ & \le \Phi[n(k-T)] - \Phi[n(k+T)] + \| \psi''_- \|_\infty  \left\|\nabla\left(n_k - \f{\sg}{\gamma}\vp_k \right)\right\|^2_{L^2((-T,T)\times \Omega)}
     \\ & \le \Phi[n(k-T)] - \Phi[n(k+T)] + \| \psi''_- \|_\infty    \frac{4T}{\gamma}    \cae [n(k-T)]
         \\ & \le \Phi[n(k-T)] - \Phi[n(k+T)] + \| \psi''_- \|_\infty    \frac{4T}{\gamma}    \cae [n^0].
   \end{aligned}
\] 
\noindent  \textit{3rd step.  Extracting subsequences.}
From these inequalities, we can extract subsequences of $(n_k, \vp_k)$ such that for $k \to \infty$, the following convergences hold toward some functions $n_\infty(x,t)$ and $ \vp_\infty(x,t)$.

We can conclude from inequalities \eqref{eq:vp-est-time} and the entropy control  that, as $k\to \infty$, 
\begin{equation}
\vp_k \rightharpoonup \vp_\infty \text{ weakly in } L^2\big(-T,T;H^1(\Omega)\big).
\label{eq:conv-weak-vp}
\end{equation} 
From the gradient bound  \eqref{eq:grad-est-time}, the $L^2$ bound in \eqref{eq:vp-est-time} and $0\leq n_k <1$, we obtain
\begin{equation}
n_k-\f{\sigma}{\gamma}\vp_k \rightharpoonup n_\infty - \f{\sigma}{\gamma}\vp_\infty \text{ weakly in } L^2\big(0,T; H^1(\Omega) \big),
\label{eq:weakcv1}
\end{equation}  
and thus
\begin{equation}
n_k \rightharpoonup n_\infty \text{ weakly in } L^2\big(0,T; H^1(\Omega) \big).
\label{eq:conv-weak-nk}
\end{equation}  
Finally, we obtain from \eqref{eq:flux-est-time} and the Cauchy-Schwarz inequality,
\begin{equation}
\p_t n_k \rightharpoonup \p_t n_\infty =0  \text{ weakly in } L^2\big(0,T; (H^1(\Omega))' \big) . 
\label{eq:conv-weak-dtnk}
\end{equation}  
Indeed,  for any test function $\phi \in C^\infty_0((-T,T)\times\Omega)$, it holds
\[
\int_{-T}^T \int_\Omega \p_t n_{k} \phi dx dt = -\int_{-T}^T \int_\Omega b(n_{k}) \nabla\left(\vp_{k} + \psi^\prime_+(n_{k})\right) \cdot \nabla \phi ,
\]
\[
\left| \int_{-T}^T \int_\Omega \p_t n_{k} \phi dx dt \right|^2 \leq  2T |\Omega| \| b\| _\infty \|  \nabla \phi \|_\infty^2
\int_{-T}^T \int_\Omega b(n_{k})\left| \nabla\left(\vp_{k} + \psi^\prime_+(n_{k})\right) \right|^2  \to 0 
\]
as $k \to \infty$. This also shows that $n_\infty$ only depends on $x$.
\par
\noindent \textit{4th step. Strong limits.}
The strong compactness  of $n_k$ and  $\vp_{k}$ follows from~\eqref{eq:grad-est-time} and the entropy control. Then, time compactness of $n_k$, stated in~\eqref{eq:conv-nk} follows from the Lions-Aubin lemma, thanks to~\eqref{eq:conv-weak-dtnk}.
The strong convergence of $\vp_{k}$ is a consequence of the elliptic equation for $\vp_{k}$ and of \eqref{eq:control-deriv-n-vp} which gives compactness in time of the quantity $n_k - \frac{\sg}{\gamma }\vp_k$.
And we also have, from the strong convergence of $n_k$ and \eqref{eq:weakcv1}, thanks to the above argument,
\begin{equation} 
b(n_k) \nabla\left(\vp_{k}+\psi^\prime_+(n_{k})\right)  \to  b(n_\infty) \nabla \left(\vp_\infty + \psi^\prime_+(n_\infty)\right)  = 0,
\label{eq:limit-conv}
\end{equation}
which establishes the zero-flux equality~\eqref{zeroflusststs}.
\end{proof}

\section{Conclusion}
 The proposed relaxation system of the degenerate Cahn-Hilliard equation with single-well potential reduces the model to two parabolic/elliptic equations which can be solved by standard numerical solvers. The relaxation uses a regularization in space of the new unknown used to transform the original fourth-order equation into two second-order equations. This new system is a non-local relaxation of the original equation which is similar in a sense to the Cahn-Hilliard equation with a spatial interaction kernel derived in \cite{GL1, GL2}. 
 We proved that in the limit of vanishing relaxation, we retrieve the original weak solutions of the DCH equation using compactness methods and estimates borrowed from energy and entropy functionals. The long-time behavior of the solutions of the RDCH system can also be studied along the same lines. We showed that a global solution of the system converges to a steady-state as time goes to infinity, with zero flux.
  
 The stationary states exhibit some interesting properties due to the degeneracy of the mobility. More precisely, they are split into two distinct zones: whether the mobility is zero, which is possible only in the pure phases, or the flux is null. 
 \par
The RDCH system aims at the design of a numerical method to simulate the DCH equation using only second order elliptic problems. Such a numerical scheme may depend on details of the relaxed model. For example, the solution represents a density and its numerical positivity is a desired property. Also, the discrete stability is useful and a change of unknown in the RDCH system might be better adapted, using $U=\vp-\frac{\gamma}{\sigma}n$,
\[
 \begin{aligned}
   \p_t n &= \nabla\cdot \left(b(n) \nabla\left(U +\f \gamma \sigma n + \psi_+^\prime(n)) \right) \right), \\
   -\sigma \Delta U + U &= -\f \gamma \sigma n +\psi^\prime_-(-\f \sigma \gamma U).
 \end{aligned}  
\] 
Even though this model also consists of a parabolic transport equation coupled with an elliptic equation, the regularity is enhanced. On the one hand, in  the first equation, the term  $\f \gamma \sigma n$ increases the diffusion for~$n$. On the other hand, the second equation regularizes for the new variable~$U$ because it depends on $n$ rather than $\Delta n$. 
In a forthcoming work, we will propose a numerical scheme based on the RDCH system, that preserves the physical properties of the solutions.    

%
%
%


\bibliographystyle{siam}  \bibliography{biblio.bib}

\end{document}